%% file: camera-uai2026-version.tex
\documentclass[accepted]{uai2026} 
                        
\usepackage[american]{babel}

\usepackage{natbib} 
\usepackage[utf8]{inputenc}
\usepackage[T1]{fontenc}
\usepackage{graphicx}
\usepackage{amsmath,amsfonts,amssymb,amsthm}
\usepackage{mathtools}
\usepackage{tikz}
\usepackage{mathrsfs}
\usepackage{xcolor}
\usepackage{hyperref}
\usepackage{cleveref}
\usepackage{booktabs}
\usepackage{bbold}
\usepackage{bm}
\usepackage{color}
\usepackage[font=footnotesize]{caption}
\usepackage{enumitem}
\usepackage{empheq}
\usepackage{framed}
\usepackage{soul}
\usepackage{dirtytalk}
\usepackage{xspace}
\usepackage{nicefrac}
\usepackage{autonum}
\usepackage[toc,page]{appendix}
\usepackage{subcaption}
\usepackage{cancel}
 
\input{macros}

\title{Exact Permutation Recovery Under Unknown Scalar Affine Transformation}

\author[1, 2]{\href{mailto:tigran.galstyan@centralesupelec.fr}{Tigran Galstyan}}
\author[1, 2]{Avetik Karagulyan}
\author[1]{Arshak Minasyan}

\affil[1]{%
    Laboratoire des Signaux et Syst\`emes (L2S)\\
    CentraleSup\'elec, Universit\'e Paris-Saclay\\
    Gif-sur-Yvette, France
}
\affil[2]{%
    Centre National de la Recherche Scientifique (CNRS)\\
    Gif-sur-Yvette, France
}

\begin{document}
\maketitle

\begin{abstract}
We study the problem of matching two sets of noisy feature vectors when underlying true features are related by an unknown scalar affine transformation.
Our method comprises two primary steps. 
First, we standardize the feature vectors to estimate the unknown scalar affine transformation. 
Subsequently, we estimate the permutation by minimizing the Least Sum of Logarithms (LSL) between two sets of observations using the estimated transformation.

Our main result shows that the unknown permutation can be perfectly recovered given that the minimal separation distance of true feature vectors scales as $\sqrt{\rhosigma} \vee (d\log n)^{1/4} \vee \sqrt{\log n}$, where $d$ is the ambient dimension, $n$ is the sample size, and $\rhosigma$ is the maximal ratio of noise magnitudes.
Interestingly, the obtained rate, under mild heteroscedasticity, coincides with that of the non-affine setting.
We additionally demonstrate that there exist configurations requiring a larger minimal separation distance for perfect recovery. 
The latter makes the matching problem more challenging from minimax perspective compared to the non-affine setting.

Consequently, we show that in the problem of feature matching, standardizing the data implicitly estimates the scalar affine parameters. 
As part of our analysis, we prove non-asymptotic concentration bounds for the affine parameter estimators in the presence of heterogeneous noise magnitudes.

\end{abstract}

\input{uai-sections/Introduction}
\input{uai-sections/Homogeneous}
\input{uai-sections/framework}
\input{uai-sections/experiments}
\input{uai-sections/Conclusion}


\begin{acknowledgements}
    We would like to thank Arnak Dalalyan and Yann Issartel for helpful discussions. 
\end{acknowledgements}
\bibliography{uai2026}

\newpage

\onecolumn

\title{Exact Permutation Recovery Under Unknown Scalar Affine Transformation\\(Supplementary Material)}
\maketitle

\appendix
\input{uai-sections/Appendix}

\end{document}

%% file: macros.tex
\newtheorem{definition}{Definition}

\newtheorem{theorem}{Theorem}
\newtheorem{lemma}{Lemma}

\newtheorem{proposition}{Proposition}

\theoremstyle{remark}

\definecolor{midnightblue}{HTML}{0059b3}
\definecolor{noonblue}{HTML}{e5eef7}
\definecolor{chromered}{HTML}{f14233}
\definecolor{olivedrab}{HTML}{6b8e23}
\definecolor{darkmidnightblue}{HTML}{154c84}
\definecolor{textblue}{HTML}{11406e}
\definecolor{darkgreen}{HTML}{0e6029}
\definecolor{mainblue}{HTML}{154c84} 

\newcommand{\eqtext}[1]{\quad\text{#1}\quad}

\newcommand{\brr}[1]{{\left( #1 \right)}} 

\newcommand{\brs}[1]{{\left[ #1 \right]}} 

\newcommand{\brc}[1]{{\left\{ #1 \right\}}} 

\newcommand{\norm}[1]{{\left\lVert #1 \right\rVert}}

\renewcommand{\leq}{\leqslant}
\renewcommand{\geq}{\geqslant}

\newcommand{\cA}{\mathcal{A}}

\newcommand{\cE}{\mathcal{E}}

\newcommand{\RR}{\mathbb{R}}
\newcommand{\R}{\mathbb{R}}

\newcommand{\ba}{\boldsymbol{a}}

\newcommand{\bV}{\boldsymbol{V}}

\newcommand{\bX}{\boldsymbol{X}}
\newcommand{\bY}{\boldsymbol{Y}}

\newcommand{\bZ}{\boldsymbol{Z}}
\def\bZdiese{\boldsymbol{Z}^\text{\tt\#}}

\newcommand{\btheta}{\boldsymbol{\theta}}

\newcommand{\kappaX}{{\kappa}({\bX})}
\newcommand{\kappaZ}{{\kappa}({\bZ})}
\newcommand{\rhosigma}{{\rho}_\sigma}

\newcommand{\bmu}{\boldsymbol{\mu}}

\newcommand{\bxi}{\boldsymbol{\xi}}

\newcommand{\bsigma}{\boldsymbol{\sigma}}

\newcommand{\id}{\textsf{id}}

\newcommand{\bTheta}{\boldsymbol{\Theta}}

\newcommand{\Prob}{\mathbf{P}}
\renewcommand{\phi}{\varphi}
\renewcommand{\epsilon}{\varepsilon}

\newcommand{\etau}{\varepsilon_{\tau}}

\usepackage{mathtools}

\def\tilde{\widetilde}
\def\hat{\widehat}

\newcommand{\Exp}{\mathbf{E}}
\newcommand{\Var}{\mathbf{var}}

\newcommand{\argmin}{\mathop{\mathrm{arg\,min}}}
\newcommand{\argmax}{\mathop{\mathrm{arg\,max}}}
\newcommand{\eqdef}{\mathrel{\mathop:} = } 

\def\trace{\mathbf{tr}}

\def\tilde{\widetilde}
\def\hat{\widehat}

\newcommand{\ie}{{\em i.e.,~}}

\def\t{\theta}

\def\bX{\boldsymbol{X}}
\def\bXdiese{\boldsymbol{X}^{\text{\tt\#}}}
\def\bXtildediese{\boldsymbol{\tilde{X}}^{\text{\tt\#}}}

\def\Xibardiese{{\overline{\xi}}^{\text{\tt\#}}}
\def\Xbardiese{\bar{{X}}^{\text{\tt\#}}}

\def\Xtildediese{\tilde{{X}}^{\text{\tt\#}}}

\def\Xbar{\bar{{X}}}
\def\Xibar{\overline{\xi}}

\def\ba{\mathbf{a}}

\def\btdiese{\boldsymbol{\theta}^\texttt{\#}}

\def\Xdiese{X^{\text{\tt\#}}}

\def\Zdiese{Z^{\text{\tt\#}}}

\def\sdiese{\sigma^{\text{\tt\#}}}
\def\sdiesesq{\sigma^{\text{\tt\#}2}}

\def\xidiese{\xi^\text{\tt\#}}

\def\bsdiese{\bsigma^\text{\tt\#}}

\def\btdiese{\btheta^\text{\tt\#}}
\def\tdiese{\theta^\text{\tt\#}}
\def\Sn{{\mathfrak{S}_n}}

\def\taustar{\tau^*{}}
\def\betastar{\beta^*{}}
\def\tauhat{\hat\tau}
\def\betahat{\hat\beta}

%% file: uai-sections/Introduction.tex

\begin{figure*}[ht]
    \centering
    \includegraphics[width=0.99\textwidth, height=120px]{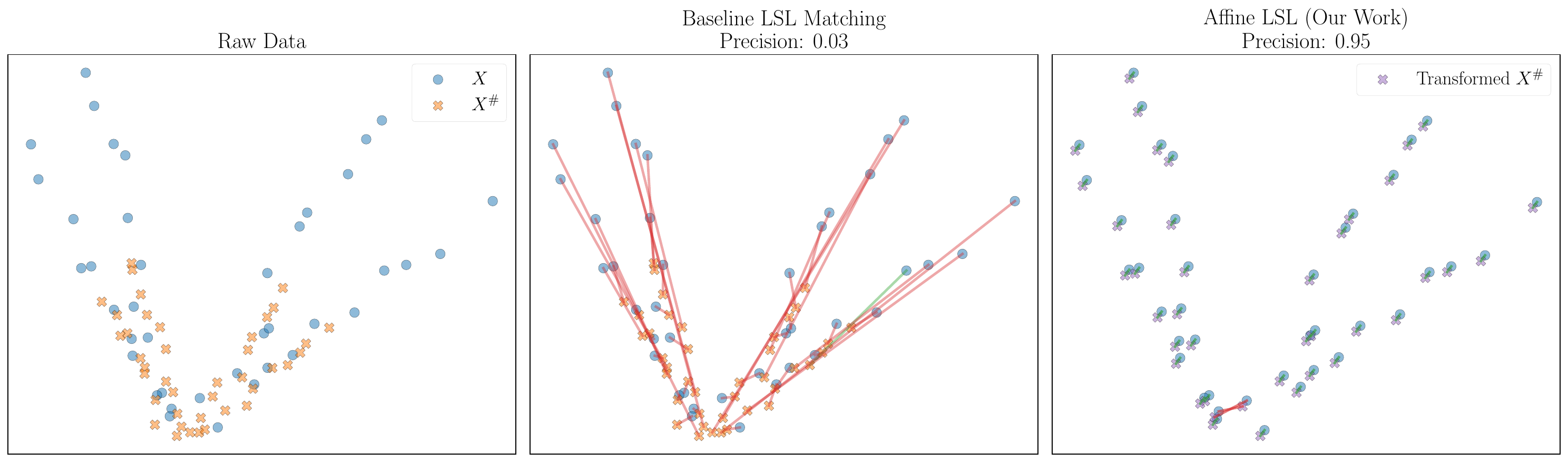}
    \caption{\textbf{Step-by-step affine matching process} 
    \textit{(Left)} PCA projection of the raw data generated according the model \eqref{eq:model}, showing the affine gap between $\bX$ and  $\bXdiese$. 
    \textit{(Middle)} The failure of standard LSL, which ignores the transformation and results in a precision of only 0.03. 
    \textit{(Right)} Our Affine LSL method estimates the transformation to align the $\Xdiese$ (purple crosses) with $X$, achieving 0.95 precision.}
    \label{fig:matching_viz}
\end{figure*}

\section{Introduction} \label{sec:introduction}
The problem of recovering the latent correspondence between two sets of noisy observations is a fundamental problem in high-dimensional statistics and data analysis, with applications ranging from point cloud registration in computer vision to sequence alignment in bioinformatics \citep{liumatcher, rdmnet, barkas2019joint}.
While the statistical limits of the matching problem have been extensively studied under simple additive noise models and the presence of outliers \citep{bakirtas, minasyan2023matching}, practical scenarios often present more complex challenges.
Specifically, the observations can be subject to an unknown affine transformation, caused by factors such as different coordinate systems, sensor calibration \citep{zhu2023mario}, measurement scales, or other environmental conditions \citep{cai2023doppelgangers}.
This work makes a step towards addressing this gap by modeling the unknown scale and translation parameters alongside the unknown permutation and analyzing the statistical limits of distinguishing the true matching from spurious alignments.

The general formulation of the matching problem is as follows. Suppose we observe two independent sequences of $d$-dimensional random vectors $\bX = \{X_i\}_{i=1}^n$ and $\bXdiese = \{\Xdiese_i\}_{i=1}^n$. In addition, we suppose that under some unknown permutation $\pi^* \in \Sn$, $\Xdiese_i, X_{\pi^*(i)} \stackrel{\textup{ind.}}{\sim} {\sf P}_i$ holds for any $i\in[n] := \{1, \dots, n\}$. The objective of the statistical matching problem is to recover the latent feature matching map $\pi^*$ from the noisy observations $\bX$ and $\bXdiese$. 

This paper studies the statistical matching problem under unknown scalar affine transformation. 
In particular, we observe two sequences of $d$-dimensional random vectors 
$\bX = \brc{X_i}_{i=1}^n$ and $\bXdiese = \{\Xdiese_j\}_{j=1}^n$ sampled according to the following model:
\begin{align} \label{eq:model}
    X_i = \theta_i + \sigma_i \xi_i, \quad \Xdiese_j = \tdiese_j + \sdiese_j \xidiese_j, \quad \forall\, i,j \in [n].
\end{align}
Here, $\btheta := \{\theta_i\}_{i=1}^n$ and $\btdiese := \{\tdiese_j\}_{j=1}^n$ are $d$-dimensional unknown feature vectors, while $\bsigma := (\sigma_1, \ldots, \sigma_n)^\top$ and $\bsdiese := (\sdiese_1, \ldots, \sdiese_n)^\top$ are unknown noise magnitudes. 
The sequences $\xi_1, \ldots, \xi_n $, and $\xidiese_1, \ldots, \xidiese_n$ are assumed to be  i.i.d. standard Gaussian vectors.
The key structural assumption is that there exists an unknown permutation $\pi^* \in \Sn$, an unknown scaling factor $\tau^* > 0$, and an unknown translation vector $\beta^* \in \mathbb{R}^d$ such that
\begin{align}\label{eq:affine-assumptions}
\theta_i = \tau^* \tdiese_{\pi^*(i)} + \beta^*, \quad \sigma_i = \tau^* \sdiese_{\pi^*(i)}, \quad \forall\, i \in [n].
\end{align}
Our goal is to jointly estimate  $\pi^*,\tau^*$, and $\beta^*$ from the noisy observations $\bX$ and $\bXdiese$.

\paragraph{Known affine transformation.}
When the parameters $\tau^*$ and $\beta^*$ are known, the problem can be directly solved using existing methods. Specifically, observe that the sets $\bX$ and $\bY^* := \brc{\tau^* \Xdiese_i + \beta^*}_{i\in[n]}$ share the same distribution up to the permutation $\pi^*$. Thus, estimating $\pi^*$ reduces to a standard permutation estimation problem, which was studied in \citet{collier2016minimax}. To briefly present their results, we define the minimal separation distance. 

\begin{definition}
    For independent random vectors $\bV = \brc{V_i}_{i\in[n]}$
    the separation distance $\kappa$ is a function defined as
    \begin{align}
        \kappa(\bV)
        \eqdef \! \min_{j\neq i} \big\|({\Var[V_i] + \Var[V_j]})^{-\frac{1}{2}}(\Exp[V_i] - \Exp[V_j])\big\|_2.
    \end{align}
\end{definition}

In the non-affine setting, and in the presence of Gaussian isotropic noise, \citet[Theorem 3]{collier2016minimax} assert that if
\begin{align}
    \kappaX \gtrsim \brr{d\log (n^2/\delta)}^{1/4} \vee \sqrt{\log (n^2/\delta)}, 
\end{align}
then the unknown permutation $\pi^*$ can be perfectly recovered with probability at least $1-\delta$. Moreover, \citet[Theorem 4]{galstyan2021optimal} assert that this rate is minimax optimal up to constant factors, providing the corresponding lower bound on the minimal separation distance.

\paragraph{Estimation procedures of unknown parameters.} 
In general, one cannot know in advance whether there is an underlying affine transformation applied to one of the datasets or not. Thus, a naive approach of directly matching the observations $\bX$ and $\bXdiese$ may result in failure to recover the unknown permutation $\pi^*$, while the minimal separation distance $\kappaX$ being arbitrarily large. Importantly, this does not contradict the previously established minimax rates, as their analysis assumes the absence of any underlying affine transformation. In other words, under model misspecification of permutation estimation the minimax rate on the minimal separation distance presented above is no longer valid.

In what follows, we propose a sequential estimation procedure: first, we estimate the unknown parameters $\tau^*$ and $\beta^*$, and then use these estimators to design an estimator for the unknown permutation $\pi^*$.
In particular, we propose the following estimator for $\taustar$
\begin{align}\label{eq:intro-tau-hat}
    \hat{\tau}_n 
    :=  \brr{\frac{\sum_{i=1}^n \| X_i - \bar{X}_n\|_2^2}{\sum_{i=1}^n \| \Xdiese_i - \Xbardiese_n\|_2^2}}^{1/2},
\end{align}
where $\Xbar_n$ and $\Xbardiese_n$ are the sample mean vectors of the sets $\bX$ and $\bXdiese$, respectively. Notice that $\hat{\tau}_n$ can be viewed as the ratio of sample standard deviations of $\bX$ and $\bXdiese$. 
The translation vector $\betastar$ can be estimated by $\hat{\beta}_n := \Xbar_n - \hat{\tau}_n\Xbardiese_n.$

Using these estimators, we show that the problem of recovering the underlying permutation $\pi^*$ reduces to matching the centralized observations $\bZ$ and ${\bZdiese}$ (scaled by $\tauhat_n$), where $\bZ = \brc{X_i - \bar{X}_n}_{i=1}^n$ and $\bZdiese = \brc{\Xdiese_i - \Xbardiese_n}_{i=1}^n$. 
Thus, in the presence of an affine transformation, we show that the quantity that plays role for recovering $\pi^*$ is indeed $\kappaZ$. 
In the non-affine setting when the permutation recovery is guaranteed with high probability, we prove that if the minimal separation distance satisfies $\kappaZ \gtrsim \kappaX \vee \sqrt{\rhosigma}$
then the unknown permutation $\pi^*$ can be perfectly recovered with high probability under unknown scalar affine transformation.
Here, $\rhosigma := \max_{i, j} (\sigma_j^2 / \sigma_i^2)$ is the maximal ratio of noise magnitudes. 
This quantity arises both in the estimation error of $\tauhat_n$ and in the condition on $\kappaZ$. 
We discuss the dependence on $\rhosigma$ at the end of this section.

The estimation procedure for $\pi^*$ is rooted in maximizing the profile likelihood function of our model \eqref{eq:model}, while using permutation invariant estimators $\hat{\tau}_n$ and $\betahat_n$.
The profile MLE approach is common in the literature and has been used in related papers, such as \citep{han2025covariance, galstyan2021optimal, collier2016minimax}.
In our setting the profile MLE approach leads to the following estimator for $\pi^*$, denoted by $\hat\pi^{\rm LSL}_n$:
\begin{align}\label{eq:pi-tau-mle}
    \hat\pi^{\rm LSL}_n \in \argmin_{\pi \in \Sn} \sum_{i=1}^n \log \brr{{\|Z_i- \hat{\tau}_n\Zdiese_{\pi(i)}\|_2^2}}.
\end{align} 
We highlight that our choice of $\hat{\tau}_n$ and $\betahat_n$ implicitly apply standardization to the observations $\bX$ and $\bXdiese$. Thus, when applying standardization, in the problem of statistical matching, we estimate the unknown transformation of the form $\tau I_d + \beta$, using the aforementioned estimators $\hat{\tau}_n$ and $\betahat_n$. We defer the detailed discussion of the standardization to \Cref{sec:main} and \Cref{sec:framework}. On the other hand, these estimators are closely connected to the profile MLE in the  homogeneous setting. 
Overall, the contributions of this paper are two-fold: estimation of the scalar affine parameters, and perfect recovery of the underlying permutation. The findings of this paper are briefly described below:
\paragraph{1.} Our main result, presented in \Cref{thm:pi_recovery}, establishes an upper bound on the minimal separation distance of the observations $\bZ$ so that with high probability the Least Sum of Logarithms estimator $\hat\pi^{\rm LSL}_n$ from \eqref{eq:pi-tau-mle} perfectly recovers the unknown permutation $\pi^*$. Specifically, in the high-dimensional regime $d \ge c \log n$ for some absolute constant $c$, we show that if
    \begin{align}
        \kappaZ \gtrsim \brr{d\log (n/\delta)}^{1/4} \vee \sqrt{\rhosigma\log(1/\delta)}, 
    \end{align}
    then our estimator exactly recovers $\pi^*$ with probability at least $1-\delta$. We refer to the case of \textit{mild heteroscedasticity} when $\rhosigma$ is of at most of order $\brr{d\log n}^{1/2} \vee \log n$. Interestingly, in this case, centralizing the original observations yields a minimax rate for separation distance that is identical to that of non-affine setting, implying its optimality. Moreover, when the maximal ratio of noise magnitudes $\rhosigma \gg \brr{d\log n}^{1/2} \vee \log n $ then the perfect permutation recovery requires the separation distance to scale proportionally to $\sqrt{\rhosigma}$. 

\paragraph{2.} The estimation error of the scaling factor $\taustar$ is presented in \Cref{thm:tau-affine}, where we establish a high-probability concentration bound of $\hat{\tau}_n$ around $\taustar$: under the model \eqref{eq:model}, the ratio $\hat{\tau}_n/\taustar$ concentrates around $1$. Up to logarithmic factors, the relative error $|\hat{\tau}_n^2/\taustar^2 - 1|$ scales as $\alpha_\sigma/\sqrt{\lambda^2 + d}$, where $\lambda$ is the \textit{distortion-to-noise ratio} and $\alpha_\sigma := \sigma_{\max}/\|\bsigma\|_2 \in [n^{-1/2}, 1]$ is the \textit{relative noise magnitude}, both defined in \Cref{sec:main}.

\paragraph{Notation.}\label{sec:notation}
    We use $[n] = \{1, \ldots, n\}$ to denote the set of integers from $1$ to $n$. 
    The symmetric group of all permutations on $[n]$ is denoted by $\Sn$. 
    For a vector ${v} = (v_1, \ldots, v_n)^\top \in \mathbb{R}^n$, we write $\|{v}\|_p = (\sum_{i=1}^n |v_i|^p)^{1/p}$ for the $\ell_p$ norm. 
    If the order is not explicitly written, then it is the Euclidean $\ell_2$-norm.
    The notation $\stackrel{d}{=}$ denotes equality in distribution. We denote by $\chi^2_d$ the chi-squared distribution with $d$ degrees of freedom. We write $a \wedge b = \min(a, b)$ and $a \vee b = \max(a, b)$.  We use the following notation for empirical averages $\bar{\theta}_n, \bar{{\t}}^{\text{\tt\#}}_n$ of the true features $\btheta$ and $\btdiese$, respectively. We use $\gtrsim$, $\lesssim$ for inequalities up to numerical constants.

\section{Related Work}

The problem of map recovery through matching has been widely studied in various settings, such as the geometric planted matching problem \citep{kunisky2022strong}, feature alignment \citep{han2025covariance}, and graph alignment \citep{ganassali2020tree, wang2022random, hall2023partial}. Some variation of robust to outliers versions of feature matching problem are studied in \citep{galstyan2021optimal, minasyan2023matching}.

Additionally, permutation estimation and related problems have recently been investigated in the context of statistical seriation \citep{flammarion2019optimal, issartel2024minimax}, clustering \citep{davies1979cluster, giraud2019partial}, noisy sorting \citep{mao2018minimax}, crowd labeling \citep{ShahBW16a}, and variable selection \citep{Ndaoud, ComDal1}.

The problem of permutation recovery has also been in a slightly different setting of unlinked or unmatched regression. In the latter setting, the response variables and their corresponding features are permuted. While the permutation recovery is not the main objective, it remains a crucial part of the problem. The case of linear regression was studies in originating in \citep{pananjady2017linear} and \citep{pananjady2017denoising}. This line of work has recently gained attention \citep{balabdaoui2021unlinked, azadkia2024linear, slawski2024permuted}. Inference from the anonymous and/or privatized data falls inside the framework of unlinked regression. 

The separation number plays a crucial role in statistical matching. 
However, its applications extend far beyond this area. 
In hypothesis testing, the minimal separation distance has emerged as the standard measure for distinguishing between null and alternative hypotheses, as illustrated in the seminal paper \citep{Ingster82} and the monograph \citep{JudNem}. 
This concept has since found numerous applications in the recent literature \citep{Wolfer, blanchard, Yuting}.

The problem of matching under unknown affine transformations has been studied in natural language processing by \citet{grave2019unsupervised,pumir2021generalized}. Specifically, \citet{grave2019unsupervised} show that simultaneously estimating an unknown permutation and affine transformation is non-convex. To address this, they propose a convex relaxation with alternating minimization of the objective function. 
However, to our knowledge, the statistical limitations from the minimax point of view for the matching recovery were not studied.

%% file: uai-sections/Homogeneous.tex

\section{Main theoretical results}\label{sec:main}

This section contains the main theoretical findings of the paper. We focus on the case where the affine parameters are unknown. We first present a 
finite-sample high-probability concentration bound for the estimator $\hat{\tau}_n$ around the true parameter $\taustar$. As for the translation parameter $\betastar$, we do not assess the estimation error of $\hat{\beta}_n$ explicitly. 
Instead, it is done implicitly when considering the centralized observations $\bZ$ and $\bZdiese$. 
Given that the norm of the empirical averages of the features $\|\bar{\theta}_n \|_2$ is large, the consistent estimation of $\betastar$ is not possible since the estimation error of $\taustar$ propagates. 
At the same time, the estimator $\hat{\tau}_n$ cannot compensate for this effect, since it is invariant to translations. However, we show that this does not affect permutation recovery.
We then present the main result of this paper, that is, we present the condition on the minimal separation distance between the centralized observations $\bZ$ and $\bZdiese$ such that $\hat{\pi}_n^{\rm {LSL}}$ perfectly recovers the unknown permutation $\pi^*$ with high probability.

\subsection{Performance analysis of affine parameter estimators}
Recall that centralized observations $\bZ$ and $\bZdiese$ are defined as $Z_i = X_i - \bar{X}_n$, $\Zdiese_i = \Xdiese_i - \Xbardiese_n$ for all $i \in [n]$. Notice that for any pairs $(i, j) \in [n]^2$ the random vectors $Z_i$ and $\Zdiese_j$ are independent. Moreover, for each $i \in [n]$, we have $Z_i \stackrel{d}{=} \tau^*\Zdiese_{\pi^*(i)}  \sim \mathcal{N}(\mu_i, s_i^2 I_d)$ with $\mu_i$ and $s_i$ defined as
\begin{align}\label{eq:def-mu-xibar}
      \mu_i \eqdef \theta_i - \bar{\theta}_n, \qquad s_i^2 := \frac{n-2}{n} \sigma_i^2 + \frac{1}{n^2} \| \bsigma\|_2^2.
\end{align}
We first establish two-sided high-probability bounds on the sum of squared $\ell_2$-norms $\sum_{i=1}^n \|Z_i\|_2^2$ of the centralized observations $Z_1, \ldots, Z_n$. 
This result will then be used to prove that the estimator $\hat{\tau}_n$, defined in \eqref{eq:intro-tau-hat}, accurately estimates the unknown parameter $\tau^*$.

\begin{proposition}\label{prop:mean-diff}
    Let $Z_1, \ldots, Z_n$ be $d$-dimensional Gaussian vectors with mean vectors $\mu_1, \dots, \mu_n$ and isotropic covariance matrices $s_1^2 I_d, \dots, s_n^2 I_d$, respectively, according to \eqref{eq:def-mu-xibar}. Denote $\sigma_{\max} := \max_{i \in [n]} \sigma_i$. Then, for any $\delta \in (0, 1)$, the following inequality holds with probability at least $1-\delta$:
    \begin{align}
    \sum_{i=1}^n \|Z_i\|_2^2 &- \|\bmu\|_2^2 - d\|\bsigma\|_2^2
\le2\sigma_{\max}\|\bmu\|_2\sqrt{2\log(2/\delta)} \\
     &+ 2\|\bsigma\|_4^2\sqrt{d\log(2/\delta)} + 2\sigma_{\max}^2 \log(2/\delta).
    \end{align}
    Similarly, with probability at least $1-\delta$, we have
    \begin{align}
        \sum_{i=1}^n \|Z_i\|_2^2 &- \|\bmu\|_2^2 - d\|\bsigma\|_2^2
\ge - 2\sigma_{\max}\|\bmu\|_2\sqrt{2\log(2/\delta)} \\
    &- 2\|\bsigma\|_4^2\sqrt{d\log(2/\delta)} - \frac{d}{n}\|\bsigma\|_2^2.
    \end{align}
\end{proposition}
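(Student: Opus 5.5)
The plan is to decompose $\sum_i\|Z_i\|_2^2$ into a deterministic part, a linear Gaussian part and a Gaussian quadratic form, and to control the two random parts with standard Gaussian tail bounds. Write $Z_i=\mu_i+e_i$ with $e_i:=\sigma_i\xi_i-\bar e$ and $\bar e:=\frac1n\sum_{j}\sigma_j\xi_j$. Note that the $Z_i$ are not independent, because of the common centering term. Expanding gives
\begin{align}
\sum_{i=1}^n\|Z_i\|_2^2=\|\bmu\|_2^2+2\sum_{i=1}^n\langle \mu_i,e_i\rangle+\sum_{i=1}^n\|e_i\|_2^2 .
\end{align}
Since $\sum_i\mu_i=\sum_i(\theta_i-\bar\theta_n)=0$, we have $\sum_i\langle\mu_i,\bar e\rangle=0$. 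The cross term therefore reduces to $2\sum_i\sigma_i\langle\mu_i,\xi_i\rangle$. This is a centered Gaussian variable with variance $4\sum_i\sigma_i^2\|\mu_i\|_2^2\le 4\sigma_{\max}^2\|\bmu\|_2^2$. The Gaussian tail bound at level $\delta/2$ bounds its absolute value by $2\sigma_{\max}\|\bmu\|_2\sqrt{2\log(2/\delta)}$, which gives the first error term in both inequalities.

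For the quadratic part, stack the noise coordinatewise. Let $D=\mathrm{diag}(\bsigma)$ and let $P=I_n-\frac1n\mathbf 1\mathbf 1^\top$ be the centering projection. For each coordinate $k\in[d]$, with $g^{(k)}=(\xi_{1,k},\dots,\xi_{n,k})^\top$ standard Gaussian in $\RR^n$,
\begin{align}
\sum_i\|e_i\|_2^2=\sum_{k=1}^d\|PDg^{(k)}\|_2^2 .
\end{align}
This is a Gaussian quadratic form with matrix $I_d\otimes A$, where $A:=DPD$ is PSD. Diagonalizing, it equals $\sum_{\ell}\lambda_\ell\chi^2_{1,\ell}$ with nonnegative weights. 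We then need three facts about $A$. First, $\mathrm{tr}(A)=\sum_i\sigma_i^2(1-\frac1n)$, so the total trace is $d\|\bsigma\|_2^2-\frac dn\|\bsigma\|_2^2$. Second, $\lambda_{\max}(A)\le\|D\|^2=\sigma_{\max}^2$, since $P$ is a contraction. Third, $\|A\|_F^2=\mathrm{tr}(PD^2PD^2)\le\mathrm{tr}(D^4)=\|\bsigma\|_4^4$; this again uses that $P$ is an orthogonal projection, via $\mathrm{tr}(PBPB)\le\mathrm{tr}(B^2)$ for PSD $B$. Hence the Frobenius norm of $I_d\otimes A$ is at most $\sqrt d\,\|\bsigma\|_4^2$.

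Now apply the Laurent--Massart inequalities for weighted chi-squares with $x=\log(2/\delta)$, which hold with probability at least $1-\delta/2$ each. The upper tail gives quadratic part $\le \mathrm{tr}+2\|\lambda\|_2\sqrt x+2\lambda_{\max}x\le d\|\bsigma\|_2^2+2\|\bsigma\|_4^2\sqrt{d\log(2/\delta)}+2\sigma_{\max}^2\log(2/\delta)$, where we simply drop the $-\frac dn\|\bsigma\|_2^2$ from the trace. The lower tail gives quadratic part $\ge \mathrm{tr}-2\|\lambda\|_2\sqrt x\ge d\|\bsigma\|_2^2-\frac dn\|\bsigma\|_2^2-2\|\bsigma\|_4^2\sqrt{d\log(2/\delta)}$. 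This explains the $-\frac dn\|\bsigma\|_2^2$ term and the absence of a linear-in-$x$ term in the lower bound. A union bound over the linear term and the relevant one-sided chi-square bound, each at level $\delta/2$, then yields each inequality with probability at least $1-\delta$.

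The main obstacle is the quadratic term, because the centering makes the $e_i$ dependent. A naive approach would treat $\sum_i\sigma_i^2\|\xi_i\|_2^2$ and $n\|\bar e\|_2^2$ separately, which would require a separate high-probability bound on the subtracted piece. Writing the term as a single PSD quadratic form $DPD$ avoids this. The only care needed is in the eigenvalue and Frobenius-norm bounds, which rest on $P$ being an orthogonal projection, and in the observation that $\sum_i\mu_i=0$ kills the interaction between the means and $\bar e$.
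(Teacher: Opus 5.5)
Your proposal is correct and follows essentially the paper's proof: the same decomposition (with $\sum_i\mu_i=0$ removing the centering from the cross term), the same quadratic form with matrix $DPD$ tensored with $I_d$, the same trace, operator-norm and Frobenius bounds, and Laurent--Massart plus a union bound. The only difference is that you bound $\mathrm{tr}((DPD)^2)\le\|\bsigma\|_4^4$ via the contraction property of $P$, whereas the paper computes it exactly as $\frac{n-2}{n}\|\bsigma\|_4^4+\frac{1}{n^2}\|\bsigma\|_2^4$ and uses $\|\bsigma\|_2^4\le n\|\bsigma\|_4^4$.

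One minor slip: for the linear term, use the one-sided Gaussian tail in the direction each inequality needs. Bounding its absolute value with probability $1-\delta/2$ would cost $\sqrt{2\log(4/\delta)}$ instead of $\sqrt{2\log(2/\delta)}$.
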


The proof is deferred to \Cref{sub:proof_of_prop_mean_diff}. 
\Cref{prop:mean-diff} is related to the Hanson-Wright inequality \citep{rudelson2013hanson}, which provides a general framework for bounding the quadratic form ${X}^\top {\sf A} {X}$, where ${X} = \brr{X_1, \ldots, X_d} \in \R^{d}$ is a $d$-dimensional sub-Gaussian vector with independent coordinates and ${\sf A} \in \R^{d \times d}$ is an arbitrary matrix.
In contrast, our setting involves correlated random vectors $Z_i$, arising from the centering step that projects the observations onto the subspace orthogonal to the all-ones direction. Despite this dependence, we still obtain a comparable concentration bound: as detailed in \Cref{sub:proof_of_prop_mean_diff}, the proof recasts the noise contribution to $\sum_{i=1}^n \|Z_i\|_2^2$ as a quadratic form in an isotropic Gaussian vector, for which a Hanson-Wright-type bound is available. Our result corresponds to the special case ${\sf A} = I_d$; we believe \Cref{prop:mean-diff} can be generalized to an arbitrary matrix ${\sf A}$, but we do not pursue this direction here. Additionally, \Cref{prop:mean-diff} can be recast as a concentration inequality for the squared Frobenius norm of the Gaussian matrix $(Z_1, \dots, Z_n) \in \RR^{d\times n}$ around its mean $\Exp\brs{\|\bZ\|_{\operatorname{Fr}}^2} = \|\bmu\|_2^2 + d\tfrac{n-1}{n}\|\bsigma\|_2^2$, which may be of independent interest. Formally, each of the following inequalities holds with probability at least $1-\delta$:
\begin{align}
    \|\bZ\|_{\operatorname{Fr}}^2 - \Exp \brs{\|\bZ\|_{\operatorname{Fr}}^2} \le \bar{\sf r}_{\textup{ub}}(\bsigma, d, \delta)
\end{align}
and 
\begin{align}
    \|\bZ\|_{\operatorname{Fr}}^2 - \Exp \brs{\|\bZ\|_{\operatorname{Fr}}^2} \ge \bar{\sf r}_{\textup{lb}}(\bsigma, d, \delta),
\end{align}
where $\bar{\sf r}_{\textup{ub}}(\bsigma, d, \delta)$ and $\bar{\sf r}_{\textup{lb}}(\bsigma, d, \delta)$ are the corresponding upper and lower bounds from \Cref{prop:mean-diff}, respectively. The next result proves that the estimator $\tauhat_n$ from \eqref{eq:intro-tau-hat} of the scaling factor concentrates around $\taustar$. The proof is postponed to \Cref{sub:proof_of_thm_tau_affine} and relies on \Cref{prop:mean-diff}.  

\begin{theorem}[Scaling factor estimation]\label{thm:tau-affine}
Consider the observations $\bX$ and $\bXdiese$ following the model in \eqref{eq:model}
and satisfying the affine conditions from \eqref{eq:affine-assumptions}.
Let $\tauhat_n$ be the estimator defined in \eqref{eq:intro-tau-hat}. 
Let $\alpha_\sigma := \sigma_{\max}/\|\bsigma\|_2$ be the relative noise magnitude,
and $\lambda = \|\bmu\|_2/\|\bsigma\|_2$ be the distortion-to-noise ratio. Fix $\delta \in [4e^{-d/(224\alpha_\sigma^2)}, 1)$ and
assume that $n \ge 8$.
Then, for any $\taustar > 0$, with probability at least $1 - 4\delta$, we have
\begin{equation}
    \Big| \frac{\hat{\tau}_n^2}{\taustar^2} - 1 \Big| \le 12\sqrt{\frac{\alpha_\sigma^2\log(4/\delta)}{\lambda^2 + d}} + \frac{2\alpha_\sigma^2\log(4/\delta)}{\lambda^2 + d} + \frac{2d}{n(\lambda^2 + d)}.
\end{equation}

\end{theorem}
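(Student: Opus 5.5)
We write $\hat\tau_n^2/\taustar^2 = A/B$, where
\[
A := \sum_{i=1}^n \|Z_i\|_2^2 \quad\text{and}\quad B := \sum_{i=1}^n \|\taustar\Zdiese_i\|_2^2 .
\]
By \eqref{eq:affine-assumptions}, the rescaled family $\{\taustar\Zdiese_{\pi^*(i)}\}$ has the same distribution as $\{Z_i\}$: the same means $\mu_i$ and the same noise vector $\bsigma$ (up to permutation, which leaves $\|\bmu\|_2$, $\|\bsigma\|_2$, $\|\bsigma\|_4$ and $\sigma_{\max}$ unchanged). Moreover, $A$ and $B$ are independent. We therefore apply \Cref{prop:mean-diff} twice to each of $A$ and $B$ (upper and lower tails), each time with $\delta$ replaced by $\delta$ itself. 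This costs at most $4\delta$ in total, and the logarithmic factor becomes $\log(2/\delta)\le\log(4/\delta)$.

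Set $M := \|\bmu\|_2^2 + d\|\bsigma\|_2^2 = \|\bsigma\|_2^2(\lambda^2+d)$. Dividing the bounds of \Cref{prop:mean-diff} by $M$, we obtain, on the intersection event, $A/M,\,B/M \in [1-\epsilon_-,\,1+\epsilon_+]$, where
\[
\epsilon_\pm \le \frac{2\sigma_{\max}\|\bmu\|_2\sqrt{2L} + 2\|\bsigma\|_4^2\sqrt{dL}}{M} + (\text{$2\sigma_{\max}^2L/M$ or $d\|\bsigma\|_2^2/(nM)$}),
\qquad L := \log(4/\delta).
\]
We simplify each term using $\|\bsigma\|_4^2\le \sigma_{\max}\|\bsigma\|_2$. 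The cross terms become
\[
\frac{\sigma_{\max}\|\bmu\|_2}{M} = \frac{\alpha_\sigma\lambda}{\lambda^2+d} \le \frac{\alpha_\sigma}{\sqrt{\lambda^2+d}},
\qquad
\frac{\|\bsigma\|_4^2\sqrt d}{M} \le \frac{\alpha_\sigma\sqrt d}{\lambda^2+d} \le \frac{\alpha_\sigma}{\sqrt{\lambda^2+d}}.
\]
The last terms are
\[
\frac{2\alpha_\sigma^2 L}{\lambda^2+d} \quad\text{and}\quad \frac{d}{n(\lambda^2+d)}.
\]
So $\epsilon_\pm \le c_1\sqrt{\alpha_\sigma^2L/(\lambda^2+d)} + (\text{the corresponding second-order term})$, with $c_1 = 2\sqrt2+2<5$.

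For the ratio, the upper bound is $A/B - 1 \le (\epsilon_+ + \epsilon_-)/(1-\epsilon_-)$, and the lower bound is $A/B - 1 \ge -(\epsilon_- + \epsilon_+)/(1+\epsilon_+) \ge -(\epsilon_-+\epsilon_+)$. The main obstacle is controlling the denominator $1-\epsilon_-$: we need $\epsilon_-\le 1/2$, say, to turn $1/(1-\epsilon_-)$ into a factor of at most $2$.

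This is exactly where the hypotheses enter. The condition $\delta\ge 4e^{-d/(224\alpha_\sigma^2)}$ gives $\alpha_\sigma^2 L\le d/224$, hence $\sqrt{\alpha_\sigma^2L/(\lambda^2+d)}\le 1/\sqrt{224}$, so the leading term $5/\sqrt{224}\approx 0.33$ is small. The condition $n\ge 8$ gives $d/(n(\lambda^2+d))\le 1/8$. Together these yield $\epsilon_-\le 1/2$ (with room to spare).

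One then tracks the constants. Writing the sum $\epsilon_++\epsilon_-$, the leading terms add to at most $2c_1<10$, and the denominator factor brings this to at most $12$ once the slack is used. Concretely, I would bound $1/(1-\epsilon_-)\le 1+2\epsilon_-$ and absorb the resulting quadratic terms, using the smallness of $\epsilon_-$, into the stated constants $12$, $2$ and $2$. If these constants do not close exactly, the only adjustment needed is the $224$ threshold; the structure of the argument is unchanged.
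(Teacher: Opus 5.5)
\textbf{Verdict: the overall structure is right, but the final constant-tracking step fails as written, so the proposal does not prove the theorem with its stated constants.} Your skeleton matches the paper's proof. You use four applications of \Cref{prop:mean-diff} (union bound at total cost $4\delta$), normalize by $M=\|\bsigma\|_2^2(\lambda^2+d)$, use $\|\bsigma\|_4^2\le\sigma_{\max}\|\bsigma\|_2$, and use the hypotheses on $\delta$ and $n$ to keep the denominator away from zero. Your lower side, $A/B-1\ge-(\epsilon_++\epsilon_-)$, is also fine. The problem is the last step, which is exactly where the constants $12$, $224$ and $n\ge 8$ are decided.

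Write $s:=\sqrt{\alpha_\sigma^2L/(\lambda^2+d)}\le 1/\sqrt{224}$ and $b:=d/(n(\lambda^2+d))\le 1/8$. With your $c_1=2\sqrt2+2\approx4.83$, the bound you get on the second-order part $(\epsilon_++\epsilon_-)\epsilon_-/(1-\epsilon_-)$ is at least $2c_1^2s^2\approx 47s^2$. At $s=1/\sqrt{224}$ this is about $3.1s$, which exceeds the available slack $(12-2c_1)s\approx2.3s$ in the leading coefficient. Concretely, for $b$ small (large $n$) and $s=1/\sqrt{224}$, even the exact ratio $(2c_1s+2s^2)/(1-c_1s)\approx0.97$ exceeds the target $12s+2s^2\approx0.81$. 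Your expansion $1/(1-\epsilon_-)\le1+2\epsilon_-$ gives about $1.08$, which is worse. So ``absorbing the quadratic terms'' fails near the edge of the admissible $\delta$-range. Your fallback of enlarging $224$ would prove a weaker theorem, not the stated one.

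Two ingredients repair this, and both are in the paper.

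\emph{First, keep the cross terms together.} Use $2\sqrt2\lambda+2\sqrt d\le2\sqrt2(\lambda+\sqrt d)\le4\sqrt{\lambda^2+d}$ (Cauchy--Schwarz even gives $2\sqrt3\sqrt{\lambda^2+d}$). Then the linear part of each $\epsilon_\pm$ is at most $4s$.

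\emph{Second, handle the ratio without any multiplicative loss.} For $0\le a\le b$ and $x\ge0$ one has $a/b\le(a+x)/(b+x)$. Taking $a=\epsilon_++\epsilon_-$, $b=1-\epsilon_-$, $x=\epsilon_-$ gives
\[
\frac{\epsilon_++\epsilon_-}{1-\epsilon_-}\le\epsilon_++2\epsilon_-\qquad\text{whenever }\epsilon_++2\epsilon_-\le1 .
\]
Then $\epsilon_++2\epsilon_-\le 3\cdot4s+2s^2+2b$, which is exactly the statement. The $12$ is $3\times4$, the $2\alpha_\sigma^2\log$ term comes only from $\epsilon_+$, and the $2d/n$ term comes from $2\epsilon_-$.

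The hypotheses are needed to verify the side condition $\epsilon_++2\epsilon_-\le1$, not $\epsilon_-\le1/2$. The paper checks it by minimizing the quadratic $h(\lambda)$ over $\lambda$. With the Cauchy--Schwarz constant it also follows directly, since $6\sqrt3\,s+2s^2+2b\le 6\sqrt3/\sqrt{224}+2/224+1/4<1$.
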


Let us now comment on the estimation error of the scaling factor $\taustar$. The bound consists of several components. The first term, proportional to $\alpha_{\sigma}/ \sqrt{\lambda^2 + d}$, up to $\sqrt{\log(1/\delta)}$ corresponds to the variance of ${\tauhat_n}^2$ in the small-deviation regime. Similarly, in the large-deviation regime, the dependence scales as $\alpha_{\sigma}^2/(\lambda^2 + d)$ up to a logarithmic factor in the confidence level $\delta$, that is $\log(1/\delta)$. This division into small- and large-deviation regimes reflects the sub-exponential character of the distribution of $\tauhat_n$, see, for example, \citep[Section 2.8]{vershynin2018high}.
 The remainder of the upper bound of \Cref{thm:tau-affine} scales as $d/(n(\lambda^2+d))$, which corresponds to the bias of the estimator $\tauhat_n^2$. Indeed, it is positively biased, which can be seen by applying Jensen's inequality to the mapping $t \to 1/t$ and the independence of $\bZ$ and $\bZdiese$. It is also worth mentioning that the bias term vanishes as $n$ grows.

The parameter $\alpha_\sigma$ present in the deviation terms admits a natural interpretation as a noise-concentration coefficient: it measures how much of the total noise variance $\|\bsigma\|_2^2$ is carried by the  largest component $\sigma_{\max}^2$. The range of possible values of $\alpha_{\sigma}$ is on the closed interval $[n^{-1/2}, 1]$, with the lower endpoint $\alpha_\sigma = n^{-1/2}$ attained in the case of homogeneous noise, that is $\sigma_i = \sdiese_j = \sigma$ for any $(i,j) \in [n]^2$. Thus, in the homogeneous case the estimation rate scales as $1/\sqrt{n(\lambda^2+d)} \vee 1/(n(\lambda^2 + d))$, which matches the optimal parametric rate of estimating a scalar parameter given $2nd$ noisy observations. Additionally, if the distortion-to-noise ratio is large enough, \textit{e.g.,} $\lambda^2 \gtrsim n$ we get fast rates of order $1/n$. At the opposite extreme, when $\alpha_\sigma$ approaches $1$, the estimation rate scales as $1/\sqrt{\lambda^2 + d}$. This effectively means that without the knowledge of $\pi^*$, only $d$ coordinates carry information about $\taustar$. A simple example of such configuration is when $\sigma_1 = 1$ and $\sigma_2 = \ldots = \sigma_n = 1/n^2$ making $\alpha_{\sigma} \asymp 1/(1+n^{-1})$. Overall, the parameter $\alpha_{\sigma}$ describes the noise magnitude configuration and its effect on the estimation rate. In the homogeneous case, the estimation rate become faster, while in a single dominated noise configuration it deteriorates. It is also worth mentioning that the result of \Cref{thm:tau-affine} is generic and holds true for any configuration of true unavailable feature vectors $\btheta$ and noise magnitudes $\bsigma$. The underlying structure of the configuration of mean vectors and noise magnitudes can make the statistical problem of estimation of scaling factor easier (fast rates) or harder (slow rates). 

Moreover, the obtained high-probability bound due to its nature also provides us with an in-expectation bound. Indeed, integrating both sides of the bound from \Cref{thm:tau-affine} with respect to $\delta$, the proposed estimator $\hat{\tau}_n$ satisfies
\begin{align}
    \sup_{\taustar > 0} \frac{1}{\taustar{}{^2}}\Exp\brs{|\hat{\tau}_n^2 - \taustar{}^2|}
    &\lesssim \sqrt{\frac{\alpha_\sigma^2}{\lambda^2 + d}} \vee \frac{\alpha_\sigma^2}{\lambda^2 + d}.
\end{align}

The setting studied in this paper is closely related to the geometric planted matching problem studied in \citep{kunisky2022strong}. 
The latter assumes that $X_1, \dots, X_n \stackrel{\text{i.i.d.}}{\sim} \mathcal{N}(0, I_d)$ and $\Xdiese_i = X_{\pi^*(i)} + \xi_i$ for all $i\in[n]$, where $\xi_1, \dots, \xi_n$ are independent Gaussian vectors with zero mean and identical covariance matrices proportional to $I_d$, i.e., $\xi_1, \dots, \xi_n \stackrel{\text{i.i.d.}}{\sim} \mathcal{N}(0, \sigma^2 I_d)$.
They showed that, in the high-dimensional regime ($d \gg \log n$), the theoretical limit for exact recovery of the underlying matching $\pi^*$ is when $\sigma^2 \lesssim d/\log n$. The key difference with our model is that we assume that the observations $X_1, \ldots, X_n$ and $\Xdiese_1, \ldots, \Xdiese_n$ are independent. 
Since independence induces less structure in the data, perfect recovery in our model is not possible unless the minimal separation distance is large enough.
We quantify the order of the minimal separation distance in \Cref{thm:pi_recovery}, and show that in the case of mild heteroscedasticity it coincides with the minimax lower bound established in \cite[Theorem 4]{galstyan2021optimal} for known $\tau^*=1$ and $\beta^* = 0$. 
This lower bound directly extends to the setting with unknown parameters $\tau^*$ and $\beta^*$.

\subsection{Exact permutation recovery of the underlying permutation}
We are now ready to present our main result on the exact permutation recovery of the unknown permutation $\pi^*$. To ease the presentation the proof of the theorem is deferred to \Cref{sub:proof_of_thm_pi_recovery}. 

\begin{theorem}[Exact permutation recovery]\label{thm:pi_recovery}
Consider the centralized observations $\bZ$ and $\bZdiese$ of $\bX$ and $\bXdiese$, respectively. 
The latter sets follow the model \eqref{eq:model}, and satisfy the affine assumption from \eqref{eq:affine-assumptions}. Let $\tauhat_n$ be the estimator of $\tau^*$ as in \eqref{eq:intro-tau-hat},
and $\hat\pi^{\rm LSL}_n$ the LSL estimator from \eqref{eq:pi-tau-mle}.
Let $\rhosigma = \max_{i,j}\sigma_j^2/\sigma_i^2$ be the maximal ratio of noise magnitudes.
Fix $\delta \in [4e^{-d/(1024\alpha_\sigma^2)} ,\, 1 )$, and assume that $n \ge d$, and $n \ge 8$. 
Then, if the separation distance $\kappaZ$ is not smaller than
\begin{align}\label{eq:thm3-rate}
    5\big(d \log(12 n^3 \! / \delta)\big)^{1/4} + 17 \!\sqrt{\log(24 n^3 \!/ \delta)} + 52\!\sqrt{\rho_{\sigma}\log(4\!/\delta)},
\end{align}
the estimator $\hat\pi^{\rm LSL}_n$ coincides with $\pi^*$ with probability at least $1-4\delta$, \textit{i.e.} $\Prob\big(\forall i \in [n] ~:~ \hat\pi^{\rm LSL}_n(i) = \pi^*(i)\big) \ge 1 - 4\delta$.
\end{theorem}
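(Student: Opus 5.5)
Without loss of generality we take $\pi^*=\id$; the estimator is equivariant under relabeling of $\bZdiese$. We then show that, on a single high-probability event, for every $i\neq j$
\begin{align}
\|Z_i-\tauhat_n\Zdiese_i\|_2^2 < \|Z_i-\tauhat_n\Zdiese_j\|_2^2 \quad\text{or, more precisely,}\quad \sum_i \log\|Z_i-\tauhat_n\Zdiese_{\pi(i)}\|^2 > \sum_i\log\|Z_i-\tauhat_n\Zdiese_i\|^2
\end{align}
for every $\pi\neq\id$. Since $\pi$ decomposes into cycles, the standard LSL argument of \citet{collier2016minimax} and \citet{galstyan2021optimal} reduces this to pairwise statements. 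It suffices to show that for all $i\neq j$
\begin{align}
\frac{\|Z_i-\tauhat_n\Zdiese_j\|^2}{\|Z_i-\tauhat_n\Zdiese_i\|^2}\cdot\frac{\|Z_j-\tauhat_n\Zdiese_i\|^2}{\|Z_j-\tauhat_n\Zdiese_j\|^2} > 1 ,
\end{align}
or a per-edge version in which the log-ratio is lower bounded by a quantity summing positively along cycles. This works because the normalizations $s_i^2$ telescope along a cycle.

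The first step is to handle the unknown scale. Write $\tauhat_n=\taustar(1+\epsilon)$. By \Cref{thm:tau-affine}, with probability $1-4\delta$ we have $|\epsilon|\lesssim \alpha_\sigma\sqrt{\log(4/\delta)/(\lambda^2+d)}+d/(n(\lambda^2+d))$. The assumptions $n\ge d$ and $\delta\ge 4e^{-d/(1024\alpha_\sigma^2)}$ make this small, say $|\epsilon|\le 1/16$. Set $Y_i:=\taustar\Zdiese_i$. Then $Y_i\stackrel{d}{=}Z_i\sim\mathcal N(\mu_i,s_i^2I_d)$, and $Z_i-\tauhat_nZdiese_j=(Z_i-Y_j)-\epsilon Y_j$. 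We split $\epsilon Y_j=\epsilon\mu_j+\epsilon s_j\eta_j$. The noise part contributes at most $|\epsilon|^2 s_j^2(d+\sqrt{d\log(n/\delta)})$ plus cross terms. The mean part $\epsilon\mu_j$ is a deterministic-direction bias of size $|\epsilon|\,\|\mu_j\|\le|\epsilon|\,\|\bmu\|_2$.

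The second step treats the oracle comparison with $\epsilon=0$, following \citet[Theorem 3]{collier2016minimax}. Expand $\|Z_i-Y_j\|^2=\|\mu_i-\mu_j\|^2+2\langle\mu_i-\mu_j,\text{noise}\rangle+\|\text{noise}\|^2$. Use Gaussian tail bounds for the linear terms, giving $\sqrt{\log(n^2/\delta)}$ times $\|\mu_i-\mu_j\|\sqrt{s_i^2+s_j^2}$, and $\chi^2$ (Laurent--Massart) bounds for the quadratic terms, giving $\sqrt{d\log(n^2/\delta)}$. A union bound over $O(n^2)$ pairs, together with the bad-$\tau$ event, produces the constants $12n^3/\delta$ and $24n^3/\delta$. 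The log-ratio for a pair $(i,j)$ is then at least of order
\begin{align}
\frac{\|\mu_i-\mu_j\|^2}{(s_i^2+s_j^2)d}-\text{deviations},
\end{align}
which is positive once $\kappaZ\gtrsim(d\log)^{1/4}\vee\sqrt{\log}$.

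The third step, which we expect to be the main obstacle, is controlling the bias term $\epsilon\mu_j$ relative to the separation $\|\mu_i-\mu_j\|$. The error $\epsilon$ scales with $\alpha_\sigma\|\bsigma\|_2/\sqrt{\lambda^2+d}$ times a global $\|\bmu\|$, while separation is local. The cross term $2\epsilon\langle \mu_i-\mu_j,\mu_j\rangle$ could a priori be large. Our first attempt is to bound $|\epsilon|\,\|\mu_j\|\le|\epsilon|\,\|\bmu\|_2\lesssim \sigma_{\max}\sqrt{\log(4/\delta)}$ using $\lambda=\|\bmu\|/\|\bsigma\|$, so that $|\epsilon|\,\|\bmu\|_2\le \alpha_\sigma\|\bsigma\|_2\sqrt{\log}=\sigma_{\max}\sqrt{\log(4/\delta)}$. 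Compared against $\sqrt{s_i^2+s_j^2}\gtrsim\sigma_{\min}$, this bias is at most $\sqrt{\rhosigma\log(4/\delta)}$ in separation units. Consequently the perturbation costs at most $\|\mu_i-\mu_j\|\cdot\sigma_{\max}\sqrt{\log}$ plus $\sigma_{\max}^2\log$. Both are dominated once $\kappaZ\ge 52\sqrt{\rhosigma\log(4/\delta)}$. This explains the third term of \eqref{eq:thm3-rate}.

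Finally, we combine all the events, the $\tau$-event from \Cref{thm:tau-affine} and the pairwise concentration events, so that the failure probabilities add to $4\delta$. Tracking the constants then yields the stated constants 5, 17 and 52.
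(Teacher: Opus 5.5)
Your outline has the same architecture as the paper's proof, but it handles the randomness of $\tauhat_n$ by a different and more elementary device, and two steps need repair. The shared parts are: reduce $\pi\neq\id$ to per-edge comparisons, control the scale through \Cref{thm:tau-affine}, union-bound over pairs, and pay for the scale error through the bias $|\epsilon|\max_i\|\mu_i\|_2/s_i\lesssim\sigma_{\max}\sqrt{\log(4/\delta)}/\sigma_{\min}=\sqrt{\rhosigma\log(4/\delta)}$, which you bound exactly as the paper does.

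The paper normalizes each comparison by $s_i^2+\tauhat_n^2 s_j^2$ (with $\taustar=1$). It then controls the suprema $\zeta_1,\zeta_2$ over $a\in[1-\etau,1+\etau]$ by a net-plus-Lipschitz argument (\Cref{lem:zeta1_bound,lem:zeta2_bound}). That is where the $n^3$ inside the logarithms comes from, and it keeps all noise terms centered at exactly $d$, so no first-order $\epsilon$ terms need tracking.

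You instead expand around $\taustar$, writing $Z_i-\tauhat_n\Zdiese_j=(Z_i-Y_j)-\epsilon Y_j$, so every stochastic quantity is free of $\tauhat_n$. Bounding these uniformly over the $O(n^2)$ pairs and only afterwards inserting $|\epsilon|\le\etau$ is legitimate even though $\epsilon$ depends on all the noise, and it needs no nets. It would give $n^2$ rather than $n^3$, so your claim that the union bound ``produces'' the stated constants is inaccurate, though harmless. Your split $\mu_i-\tauhat_n\mu_j=(\mu_i-\mu_j)-\epsilon\mu_j$ also handles the fact that the direction in $\zeta_1$ moves with $\tauhat_n$, which \Cref{lem:zeta1_bound}, stated for fixed $u_{i,j}$, does not literally cover.

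The price is that first-order terms must be tracked by hand, and one sits in the ``cross terms'' you set aside. The term $-2\epsilon\langle Z_i-Y_j,\taustar\Xibardiese_{j,n}\rangle$ contains $2\epsilon\|\taustar\Xibardiese_{j,n}\|_2^2\approx 2\epsilon s_j^2 d$. After dividing by $s_i^2+s_j^2$ this is of order $|\epsilon| d\lesssim\alpha_\sigma\sqrt{d\log(4/\delta)}+d/n$, the size of the noise floor rather than $\epsilon^2 d$. You can neutralize it in one of two ways. Either normalize the wrong pair by $s_i^2+(1+\epsilon)^2 s_j^2$ and the correct pair by $(1+(1+\epsilon)^2)s_i^2$, which cancels it. Or absorb it into the $(d\log n)^{1/4}$ term using $\alpha_\sigma\le 1$ and $n\ge d$, at the cost of worse constants.

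The reduction must also be stated correctly. Requiring the $2$-cycle product to exceed $1$ for every $i\neq j$ does not suffice: cycles of length $\ge 3$ are not controlled by their $2$-cycle sub-sums in general. For example, per-edge log-ratios equal to $-1$ around a $3$-cycle and $+2$ in the reverse direction make every $2$-cycle sum positive and the $3$-cycle sum negative. What works is your second option made explicit. For all $i\neq j$, require $\log\big(\|Z_i-\tauhat_n\Zdiese_j\|_2^2/\|Z_i-\tauhat_n\Zdiese_i\|_2^2\big)>c_{ij}:=\log\big((s_i^2+t^2 s_j^2)/((1+t^2)s_i^2)\big)$ with $t=\tauhat_n/\taustar$. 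The fact that $\sum_i c_{i\pi(i)}\ge 0$ for every $\pi$ is not pure telescoping. The part $\frac{t^2}{1+t^2}\log(s_{\pi(i)}^2/s_i^2)$ telescopes, and the remainder is nonnegative by weighted AM--GM, i.e.\ concavity of $\log$ (the paper's Jensen step). Choosing $t=1+\epsilon$ rather than $t=1$ is exactly what cancels the $\epsilon d$ term above.

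Two smaller points. First, the hypotheses do not give $|\epsilon|\le 1/16$: at $\lambda=0$ and the boundary value of $\delta$, the first term of \Cref{thm:tau-affine} equals $3/8$, and the bias term can be $1/4$ when $n=8$. You only need $1+\epsilon$ bounded away from $0$. Second, \Cref{thm:tau-affine} by itself already spends $4\delta$, so the total failure probability and the constants $5,17,52$ have to be recomputed rather than asserted. The rate your argument yields is the right one.
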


The sufficient condition for exact recovery established above consists of three terms. Interestingly, these terms are of different nature. The first two terms depend only on sample size $n$, ambient dimension $d$, and confidence level $\delta$. These terms essentially come from controlling the supremum of Gaussian and $\chi^2_d$ random variables over a segment of constant length and maximum over $2n$ pairs. Unsurprisingly, compared to the non-affine setting, it results in the same minimax separation distance of order $(d\log n)^{1/4} \vee \sqrt{\log n}$. The last term, depends on the noise configuration, namely, the maximal ratio of noise magnitudes. This term comes from the application of \Cref{thm:tau-affine}, when we estimate the unknown scaling factor $\taustar$. Interestingly, in the bound of \Cref{thm:tau-affine} only the relative noise magnitude $\alpha_{\sigma} \in [n^{-1/2}, 1]$ appears. One would expect to have the same quantity appearing in the sufficient condition for the separation distance $\kappaZ$. However, careful inspection of the proof of \Cref{thm:pi_recovery} shows that for the matching problem the key quantity is the maximal ratio $\rho_{\sigma}$. Thus the entire dependence on noise heterogeneity is encoded in the ratio $\sigma_{\max}/\sigma_{\min} \equiv \sqrt{\rhosigma}$. Overall, we get that as long as $\kappaZ$ is at least of order 
\begin{align}
    \max\Big\{\big(d\log n)^{1/4}, ~ \sqrt{\log n}, ~\frac{\sigma_{\max}}{\sigma_{\min}}\Big\}
\end{align}
then the exact permutation recovery  under an unknown scalar affine transformation is possible with high probability. We further discuss the optimality of the result in the following paragraph.

\paragraph{Lower bounds.} 
\Cref{thm:pi_recovery} implies that the minimal separation distance between the centralized observations $\bZ$ and $\bZdiese$ that suffices for the perfect recovery of $\pi^*$ is at most of order $\big(d\log n\big)^{1/4} \vee \sqrt{\log n} \vee \sqrt{\rhosigma}$. 
A natural question is whether this rate is optimal in the minimax sense. Under mild heteroscedasticity, that is when $\rho_{\sigma} \lesssim \sqrt{d \log n} \vee \log n$, the rate is indeed minimax optimal, since the third term is dominated by the first two and the minimax optimality follows from \citep[Theorem~4]{galstyan2021optimal}. Under strong heteroscedasticity, that is when $\rho_{\sigma} \gg \sqrt{d \log n} \vee \log n$, there is a gap between the minimax lower bound (non-affine setting), and our upper bound. It remains open whether the lower bound for the non-affine setting is loose for our model, or our upper bound is not minimax optimal under strong heteroscedasticity. Additionally, the optimal dependence on $\rhosigma$, if any, remains open.

We perform numerical experiments to support the findings of \Cref{thm:pi_recovery}, see \Cref{sec:computational} for more detailed explanations of the experiments. In \Cref{fig:lsl_lower_bound}, we experimentally illustrate that the permutation recovery problem does become harder as $\rhosigma$ grows. That is, our algorithm starts to exactly recover the permutation when the separation distance becomes larger. For smaller values of the separation distance the estimator $\hat\pi^{\rm LSL}_n$ only partially recovers $\pi^*$. The problem of partial permutation recovery is out of scope of this paper.

\paragraph{Standardization cost.} 

In practice, it is typically unknown whether an underlying affine transformation exists in the permutation model.

If there is no affine transformation, that is $\taustar = 1$, $\betastar = 0$, and the maximal noise ratio $\rhosigma$ is large, our procedure becomes suboptimal due to error propagation in estimating $\taustar$. For large $\rhosigma$, the precision of $\hat{\tau}_n$ deteriorates, introducing additional error when matching $\bZ$ and $\hat{\tau}_n \bZdiese$. Consequently, the LSL method becomes more susceptible to noise, leading to failures in perfect permutation recovery. In such cases, data standardization should be avoided.

If an affine transformation is present and the heteroscedasticity is mild, that is $\rhosigma$ is of constant order relative to other parameters, our procedure is preferable. 
It achieves the minimax optimal rate for permutation recovery. According to \Cref{thm:tau-affine}, $\tauhat_n$ accurately estimates $\taustar$, and provided that $\kappaZ \gtrsim (d \log n)^{1/4} \vee \sqrt{\log n}$, the LSL method recovers $\pi^*$ perfectly with high probability. In this scenario, data standardization implicitly estimates the unknown transformation, enabling perfect recovery via LSL.

%% file: uai-sections/framework.tex

\section{Motivation and Discussion}\label{sec:framework}
In this section, we describe the approach based on the profile Maximum Likelihood Estimation (MLE) both in heterogeneous and homogeneous noise settings. In the latter case, we derive the estimators for parameters $\taustar$ and $\beta^*$ showing that they coincide with $\hat{\tau}_n$ and $\hat{\beta}_n$, respectively. Lastly, we draw the connection of these estimators to data standardization. 
\paragraph{Profile MLE.}\label{subsection:homogeneous_estimators}

Recall that our observations follow the statistical model \eqref{eq:model}, under the scalar affine transformation described in \eqref{eq:affine-assumptions}.
In the context of statistical matching, the main parameter of interest is the unknown permutation $\pi^*$, while the others are treated as nuisance parameters.
Below, we derive an estimator for $\pi^*$ based on maximizing the profile maximum likelihood function of the model \eqref{eq:model}. 
Let us define by $\bTheta$ the set of all nuisance parameters, that is $\{\btdiese,\bsdiese\}$. 
Notice that $\bX \cup \bXdiese$ is a set of $2n$ independent Gaussians. 
Thus, for any values of the parameters $\pi, \tau, \beta$, the negative logarithm of the full maximum likelihood function is up to some additive and multiplicative constants equal to 
\begin{align}
 \ell_n( \pi, \tau, \beta, &{\bTheta}; \{\bX, \bXdiese\}) 
 = \sum_{i=1}^n \Bigg(\frac{\big\|X_i - \tau\tdiese_{\pi(i)} - \beta\big\|_2^2}{\tau^2\sdiesesq_{\pi(i)}} 
 \\&+ \frac{\big\|\Xdiese_{\pi(i)} - \tdiese_{\pi(i)}\big\|_2^2}{\sdiesesq_{\pi(i)}} 
+ d\log\big(\tau\sdiesesq_{\pi(i)}\big) \Bigg), \label{eq:log-likelihood}
\end{align}
where we have already taken into account the scalar affine assumptions from \eqref{eq:affine-assumptions}, and got rid of the dependence on parameters $\btheta$ and $\bsigma$.
The set of parameters $\bTheta$ is unknown and our goal is to minimize the negative log-likelihood function $\ell_n$ with respect to them. Since, the function $\ell_n$ is not convex in $\bTheta$ we first minimize it with respect to $\btdiese$, then with respect to $\bsdiese$. 
Notice further that the log-likelihood is separable with respect to the variables $\tdiese_i$. Thus, for fixed  $\pi, \tau, \beta$ and $\bsdiese$, minimizing with respect to $\btdiese$ yields the following profile likelihood function up to an additive constant:
\begin{align}
     \ell_n( \pi, &\tau, \beta, \bsdiese; \{\bX, \bXdiese\}) \\
     &= \sum_{i=1}^n \Bigg( \frac{\big\|X_i - \tau\Xdiese_{\pi(i)} - \beta\big\|_2^2}{2\tau^2\sdiesesq_{\pi(i)}}  + d\log(\tau\sdiesesq_{\pi(i)}) \Bigg).
\end{align}
Further minimizing the expression from the previous display with respect to $\sdiese_{\pi(i)}$ for all $i$, under the condition that $\min_{i\in[n]} \sdiese_i := \sdiese_{\min} > 0$, we obtain the following expression for $\ell_n$:
\begin{align}\label{eq:likelihood-tau}
    \ell_n( \pi, \tau, \beta; \{\bX, \bXdiese\}) \!= \! \sum_{i=1}^n \log \brr{\frac{\|X_i  - \tau \Xdiese_{\pi(i)} - \beta\|_2^2}{\tau}}.
\end{align}
First, notice that the likelihood is non-convex with respect to $\tau$.
Second, for any $i \in [n]$ we have 
\begin{align}
    \lim_{\beta \rightarrow X_i - \tau \Xdiese_{\pi(i)}} \ell_n(\pi, \tau, \beta, \bX, \bXdiese) = -\infty.
\end{align}
Thus, there is no closed form solution for the minimization of $\ell_n$ in terms of $\beta$ and $\tau$. 
To circumvent this issue, we plug in the permutation invariant estimators $\hat{\tau}_n$ and $\hat{\beta}_n$ to estimate the unknown permutation $\pi^*$. Therefore, using the definition of centralized observations $\bZ$ and $\bZdiese$, the minimization of the quasi log-likelihood function yields the Least Sum of Logarithms estimator $\hat{\pi}_n^{\textup{LSL}}$, defined as per \eqref{eq:pi-tau-mle}.

The optimization problem that yields $\hat{\pi}_n^{\textup{LSL}}$ is a  computationally tractable linear assignment problem with a cost matrix $M_{ij} = \log(\| Z_i - \hat{\tau}_n \Zdiese_{j}\|_2^2)$ for all $(i, j) \in [n]^2$.
In particular, it can be efficiently solved using the Hungarian algorithm \citep{kuhn1955hungarian}. 
The idea of the Hungarian algorithm is based on the relaxation of the set of permutation and Birkhoff-von-Neumann theorem \citep{BudishCheKojimaMilgrom2009}. 
In particular, the permutation estimation reduces to 
\begin{align}
    \hat{\pi}_n^{\textup{LSL}}  \in \argmin_{P_{\pi} \in \mathcal{P}} \trace(M\cdot P_{\pi}),
\end{align}
where $\mathcal{P}$ is the set of all doubly stochastic matrices of size $n$.
We refer the reader to \citep{collier2016minimax, galstyan2021optimal} for more details on this relaxation and the linear assignment problem in the context of matching. To further justify the choice of $\hat{\tau}_n$ and $\hat{\beta}_n$, let us show their connection to the profile MLE estimators in the homogeneous noise setting.

\paragraph{Homogeneous setting.} The statistical problem of matching in the case of homogeneous noise case, where $\sdiese_i = \sdiese$ for all $i \in [n]$ does not pose the difficulties discussed earlier. To this end, we depart from the negative logarithm of the full maximum likelihood function considering that the noise magnitudes are equal. This simplifies the negative log-likelihood to:
\begin{align}
    \ell_n(\pi, \tau, \beta; \{\bX, \bXdiese\}) = \sum_{i=1}^n \big\|X_i - \tau\Xdiese_{\pi(i)} - \beta\big\|_2^2.
\end{align}
Now for any fixed $\tau = \tau_0$ the minimization with respect to $\beta$ yields the following permutation invariant estimator of $\beta^*$, denoted by ${\beta}_0 := \bar{X}_n - {\tau}_0\Xbardiese_n$. Then, plugging back in the expression for ${\beta}_0$ and using the notation for the centralized observations $\bZ$ and $\bZdiese$ we arrive at 
\begin{align}
    \ell_n(\pi, \tau_0; &\{\bX, \bXdiese\})=\sum_{i=1}^n \big\|Z_i - \tau_0\Zdiese_{\pi(i)}\big\|_2^2.\label{eq:likelihood-tau0}
\end{align}
By expanding the norms and removing all terms independent of $\pi$, we observe that the above function can be minimized without knowledge of $\tau_0$. 
Thus, minimizing \eqref{eq:likelihood-tau0} leads to the Least Sum of Squares (LSS) estimator, defined as
\begin{align}
    \hat{\pi}_n^{\textup{LSS}} 
    \in \argmax_{\pi \in \Sn} \sum_{i=1}^n Z_i^\top\Zdiese_{\pi(i)}.
    \label{eq:pi-lss}
\end{align}
Thus, in the homogeneous case, we showed that the knowledge of $\tau^*$ is not necessary to be able to estimate the unknown underlying permutation $\pi^*$. 
The maximization problem \eqref{eq:pi-lss} is again a linear assignment problem, and can be solved efficiently with time complexity $\mathcal{O}(n^3)$ using the Hungarian algorithm. Notice that the optimization problem \eqref{eq:likelihood-tau0} is quadratic with respect to $\tau$. Since the permutation recovery in the homogeneous case is independent of $\taustar$ one can estimate $\taustar$ after recovering the unknown permutation. The value that minimizes \eqref{eq:likelihood-tau0} reads as
\begin{align}
    \hat{\tau}_0 = \brr{\frac{\sum_{i=1}^n Z_i^\top\Zdiese_{\pi(i)}}{\sum_{i=1}^n \|\Zdiese_{\pi(i)}\|_2^2}}^{\frac{1}{2}}.
\end{align}
Now, we recognize that on the event $\Omega = \{\hat{\pi}_n^{\textup{LSS}} = \pi^*\}$ the denominator is close to the sum of squared $\ell_2$ norms of centralized observations $Z_1, \dots, Z_n$. Formally, let $\hat{\pi} \equiv \hat{\pi}_n^{\textup{LSS}}$, then conditionally on $\Omega$ we have
\begin{align}
     \sum_{i=1}^n \Exp\big[Z_i^\top\Zdiese_{\hat{\pi}(i)}\big] 
     = \sum_{i=1}^n \| \Exp[Z_i]\|_2^2,
\end{align} 
where the right hand side is permutation invariant. Therefore, this motivates us to use the estimator for $\tau^*$ that is defined as per \eqref{eq:intro-tau-hat}.
Even though this estimator does not maximize the profile likelihood function, it has a crucial property of being permutation invariant. 
In \Cref{sec:main} we show that $\hat{\tau}_n$ accurately estimates $\tau^*$ achieving a minimax optimal rate even in the heterogeneous noise setting. 

\paragraph{Link to standardization.} 
Let us now demonstrate that the estimators $\hat{\tau}_n$ and $\hat{\beta}_n$ implicitly apply standardization to the observations $\bX$ and $\bXdiese$. 
To this end, let $\tilde{\bX} = \{\tilde{X}_i\}_{i=1}^n$ and $\bXtildediese = \{\Xtildediese_i\}_{i=1}^n$ denote the standardized versions of $\bX$ and $\bXdiese$, respectively. For each $i \in [n]$ define
\begin{align}
    &\tilde{X}_i := \frac{X_i - \Xbar_n}{\sqrt{\sum_{i=1}^n \| X_i - \Xbar_n\|_2^2}},\\
    &\Xtildediese_i := \frac{\Xdiese_i - \Xbardiese_n}{\sqrt{\sum_{i=1}^n \| \Xdiese_i - \Xbardiese_n\|_2^2}}.
\end{align}
Notice that the LSL estimator for $\pi^*$ for the standardized observations $\tilde{\bX}$ and $\bXtildediese$ is defined as follows:
\begin{align}
    \hat{\pi}_n^{\textup{LSL}} &\in \argmin_{\pi \in \Sn} \sum_{i=1}^n \log \big(\|\tilde{X}_i -   \Xtildediese_{\pi(i)} \|_2^2\big).
\end{align}
Since $\sqrt{\sum_{i=1}^n \| X_i - \Xbar_n\|_2^2}$ is permutation invariant, the latter optimization problem is equivalent to \eqref{eq:pi-tau-mle}.

%% file: uai-sections/experiments.tex

\section{Numerical Experiments} \label{sec:computational}

In this section, we present numerical experiments on synthetic data to validate the main theoretical findings of our work. \Cref{fig:matching_viz} visualizes two $100$-dimensional synthetic datasets projected onto a shared 2D space using PCA. The underlying model for these datasets follows \eqref{eq:model} with the affine conditions specified in \eqref{eq:affine-assumptions}.
The results demonstrate that the baseline Least Sum of Logarithms (LSL) method, which does not account for the scalar affine transformation, fails to recover the correct matching. In contrast, our proposed method  estimates the unknown permutation with high accuracy.

The remainder of this section is organized as follows. 
In \Cref{sec:experiment-taustar}, we discuss the estimation procedure for $\taustar$ using $\tauhat_n$. 
Subsequently, \Cref{subsec:exp_permutation_recovery} presents numerical results that illustrate the findings of \Cref{thm:pi_recovery}, with a particular focus on the error introduced by the affine transformation - the component not present in prior work.

\subsection{Estimation Error of \texorpdfstring{$\tau^*$}{Tau}}\label{sec:experiment-taustar}
We demonstrate the theoretical result in \Cref{thm:tau-affine} through synthetic experiments conducted under the affine model \eqref{eq:model}--\eqref{eq:affine-assumptions}. The data is generated with a scale parameter $\tau^* = 3$ and a shift $\betastar = 0$. The distortion-to-noise ratio, $\lambda = \|\bmu\|_2/\|\bsigma\|_2$, is controlled by adjusting the spread of the parameters $\btheta$, and the noise magnitudes $\bsigma$ are specified separately in each regime below. Beyond the resulting value of $\lambda$, the specific configuration of the sets of vectors $\btheta$ and $\btdiese$ does not affect the estimation of $\taustar$.
The scale parameter $\tau^*$ is estimated using the estimator $\hat{\tau}_n$, as defined in \eqref{eq:intro-tau-hat}.
We conduct two experimental regimes, each averaged over 1000 trials.

In the first regime, shown in the top plot of \Cref{fig:error_tau_joint}, we vary the relative noise magnitude $\alpha_\sigma = \sigma_{\max}/\|\bsigma\|_2$ while keeping $(n, d, \lambda) = (2000, 20, 1)$ fixed. We do so by concentrating the total noise budget $\|\bsigma\|_2$ on a varying number $k$ of equally noisy coordinates while keeping the remaining magnitudes negligible, which sets $\alpha_\sigma \asymp 1/\sqrt{k}$ and sweeps it across its range $[n^{-1/2}, 1]$; we work in the regime $n \gg d$, so that the bias term $2d/(n(\lambda^2+d))$ is negligible. The empirical error then grows linearly in $\alpha_\sigma$, saturating the leading term $\alpha_\sigma/\sqrt{\lambda^2 + d}$ of \Cref{thm:tau-affine}.
In the second regime, depicted in the bottom plot of \Cref{fig:error_tau_joint}, we simultaneously increase both $d$ and $\lambda$ to explore a wide range of values for the joint quantity $\lambda^2 + d$, while fixing the sample size at $n = 100$ and using homogeneous noise (so that $\alpha_\sigma = n^{-1/2}$ is held fixed).
Notably, the empirical error closely follows the theoretical reference slope of $1/\sqrt{\lambda^2 + d}$, when plotted against $\lambda^2 + d$.
This confirms the sharpness of the bound of \Cref{thm:tau-affine}, demonstrating that the empirical error is indeed governed by the two quantities $\alpha_\sigma$ and $\lambda^2 + d$ through its leading term $\alpha_\sigma/\sqrt{\lambda^2 + d}$.

\begin{figure}[t]
    \centering
    \includegraphics[width=0.48\textwidth]{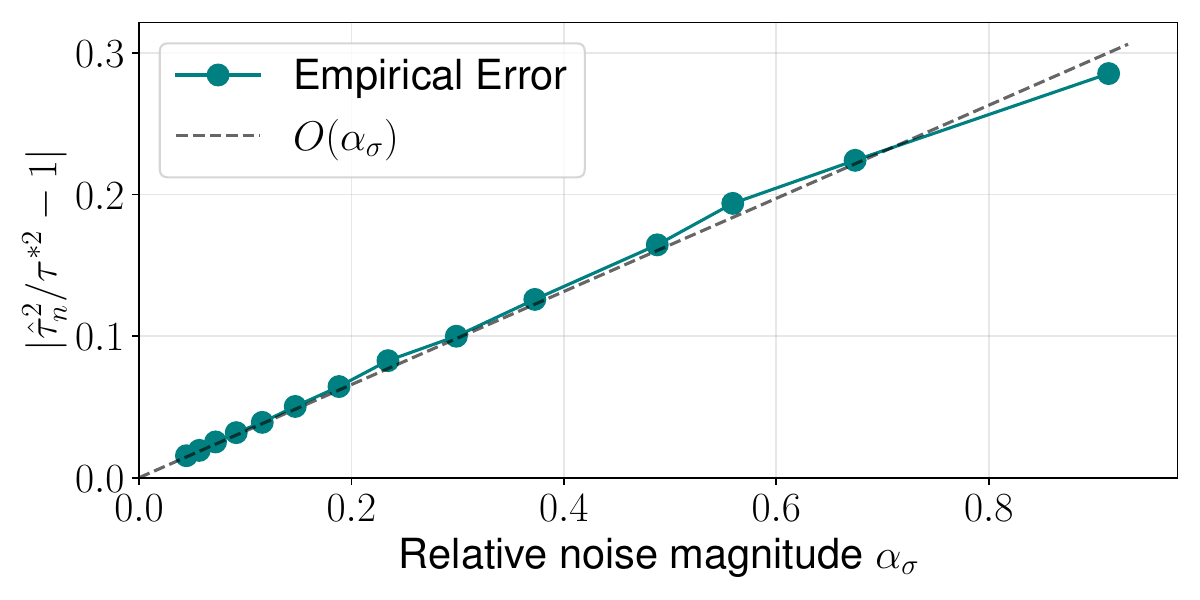}
    \includegraphics[width=0.48\textwidth]{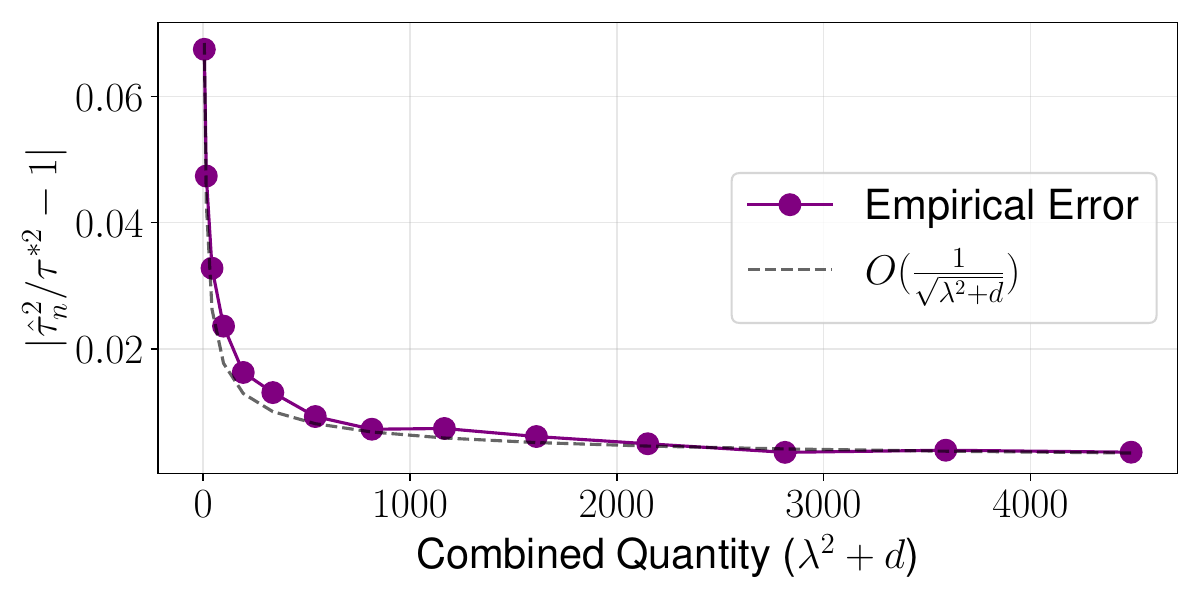}
    \caption{Estimation error versus relative noise magnitude $\alpha_\sigma$ \textit{(Top)} and joint quantity $\lambda^2 + d$ \textit{(Bottom)}. The empirical error closely follows the theoretical linear scaling in $\alpha_\sigma$ and the slope $1/\sqrt{\lambda^2 + d}$, validating the sharpness of the bounds in \Cref{thm:tau-affine}.}
    \label{fig:error_tau_joint}
\end{figure}

\subsection{Permutation Recovery and Necessity of the \texorpdfstring{$\sqrt{\rhosigma}$}{Square Root of Rho Sigma} Term}
\label{subsec:exp_permutation_recovery}

\begin{figure}[t]
    \centering
    \includegraphics[width=0.48\textwidth]{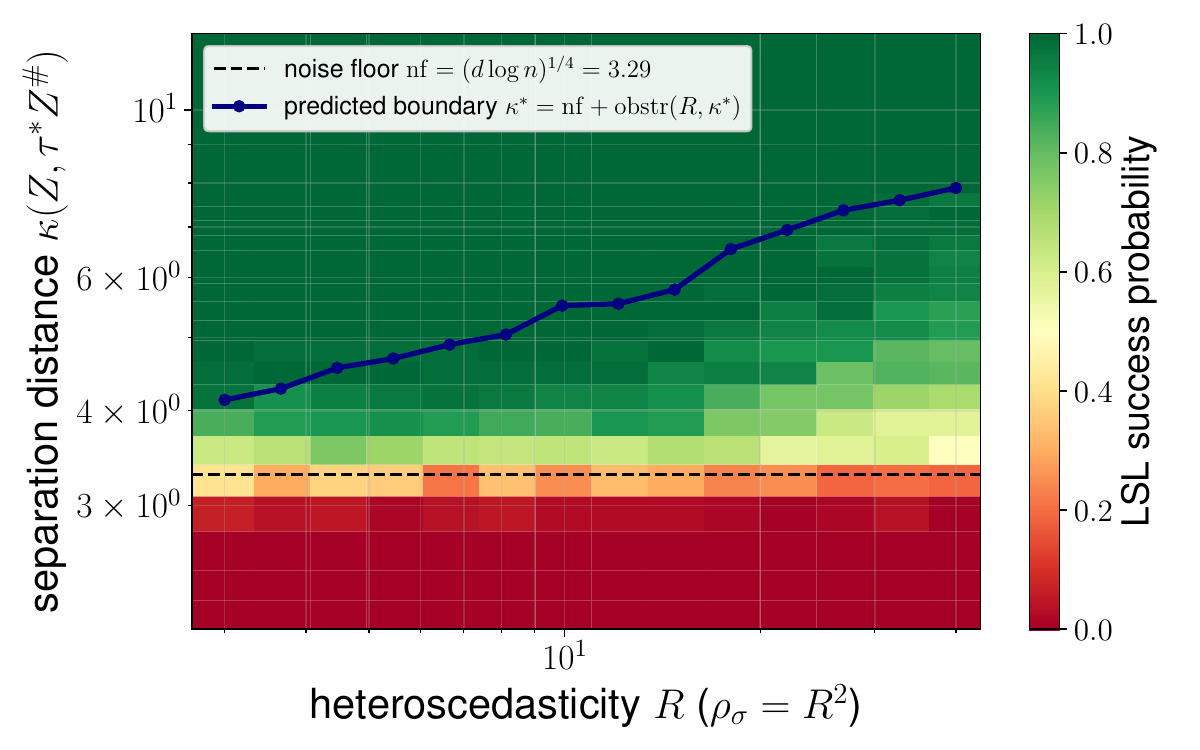}
    \caption{Empirical $\Pr(\hat{\pi}_n^{\textup{LSL}} = \pi^*)$ on the adversarial
    construction, over heteroscedasticity $R$ (with $\rhosigma = R^2$) and the separation
    distance $\kappaZ$; here $n = 2500$, $d = 15$, and each cell is averaged over $80$ trials.
    The dashed line marks the noise floor $\mathrm{nf} = (d\log n)^{1/4}$, the separation that
    already suffices when $\tau^*$ is known. The solid curve is the predicted threshold
    $\kappa^\star = \mathrm{nf} + \mathrm{obstr}(R,\kappa^\star)$ with
    $\mathrm{obstr} = \etau\max_{i}\|\mu_i\|_2/s_i$. Recovery fails in a band that rises above
    the noise floor as $\rhosigma$ grows, tracking the third term of \Cref{thm:pi_recovery}.}
    \label{fig:lsl_lower_bound}
\end{figure}

\Cref{thm:pi_recovery} guarantees perfect recovery once $\kappaZ$ exceeds the sum of three
contributions. The {noise floor} $(d\log n)^{1/4}\vee\sqrt{\log n}$, inherent to the
Gaussian and $\chi^2$ fluctuations, are standard and consistent with prior work \citep{collier2016minimax}. 
The third term $\sim \sqrt{\rhosigma}$ is due to the estimation of $\tau^*$ and comes downstream as an upper bound of the term $\etau\max_{i}\|\mu_i\|_2/s_i$, where $\etau$ denotes the relative error of the scale estimate $\tauhat_n$. 
Below we focus on this term, which is specific to the unknown affine transformation. We ask whether it is an artifact of our proof or an intrinsic limitation of the standardization-based LSL procedure, and we answer the latter with an explicit construction.

\paragraph{Construction.}
Fix a unit vector $V \in \mathbb{S}^{d-1}$ and set $\tau^* = 1$, $\beta^* = 0$, and
$\pi^* = \id$. We place an \emph{adversarial pair} $\theta_1 = MV$, $\theta_2 = (M+\Delta)V$ with
$\sigma_1 = \sigma_2 = 1$, a mirror pair $\theta_3 = -MV$, $\theta_4 = -(M+\Delta)V$ that keeps
$\bar{\theta}_n = 0$, and $n-4$ low-signal \emph{inflator} pairs $\theta_{4+2k-1} = r u_k$,
$\theta_{4+2k} = -r u_k$ along directions $u_k \in V^\perp$ with large noise $\sigma_i = R$.
The inflators carry almost all of the noise mass, so that $\rhosigma = R^2$ and $\tauhat_n$ is
least accurate precisely along the adversarial direction $V$. We set $\Delta = \sqrt{2}\,s_1\kappaZ$ so that the adversarial pair attains the minimum separation $\kappaZ$, and
$M = C R\sqrt{nd}$ so that $\max_{i}\|\mu_i\|_2/s_i$ saturates the bound
\eqref{eq:thm3-rate}, making the term $\etau\max_{i}\|\mu_i\|_2/s_i \asymp
\sqrt{\rhosigma}$ the important quantity.

\paragraph{Results.}
\Cref{fig:lsl_lower_bound} reports the empirical recovery probability of $\hat{\pi}_n^{\textup{LSL}}$ over the $(R, \kappaZ)$ plane. Were the $\sim\sqrt{\rhosigma}$ term an artifact, recovery would succeed everywhere above the dashed line; instead a wide band of failure persists above it, and its upper edge climbs steadily with $\rhosigma$, closely tracked by $\etau\max_{i}\|\mu_i\|_2/s_i$. 
\Cref{fig:lsl_lb_slices} exposes the mechanism behind this band. Its top panel plots the obstruction
$\mathrm{obstr}(\kappaZ, R) = \etau\max_{i}\|\mu_i\|_2/s_i$ against the threshold line, and its bottom panel shows the matching success curves on the same $\kappaZ$ axis. For each $R$, success rises to one precisely once the obstruction
curve drops below the threshold line; a larger $R$ carries a larger obstruction and therefore fails
over a wider range of separations, so the highest-$R$ curve transitions last. Since this
obstruction is $\asymp \sqrt{\rhosigma}$ in our family, the failure boundary scales as
$(d\log n)^{1/4}\vee\sqrt{\rhosigma}$: the third term of \Cref{thm:pi_recovery}, cannot be removed by a sharper analysis of LSL. 

See more experimental results on real-world translation data from the OPUS-100 dataset \citep{opus} in \Cref{subsec:real_lang} of the supplementary material of this work.

\begin{figure}[t]
    \centering
    \includegraphics[width=0.48\textwidth]{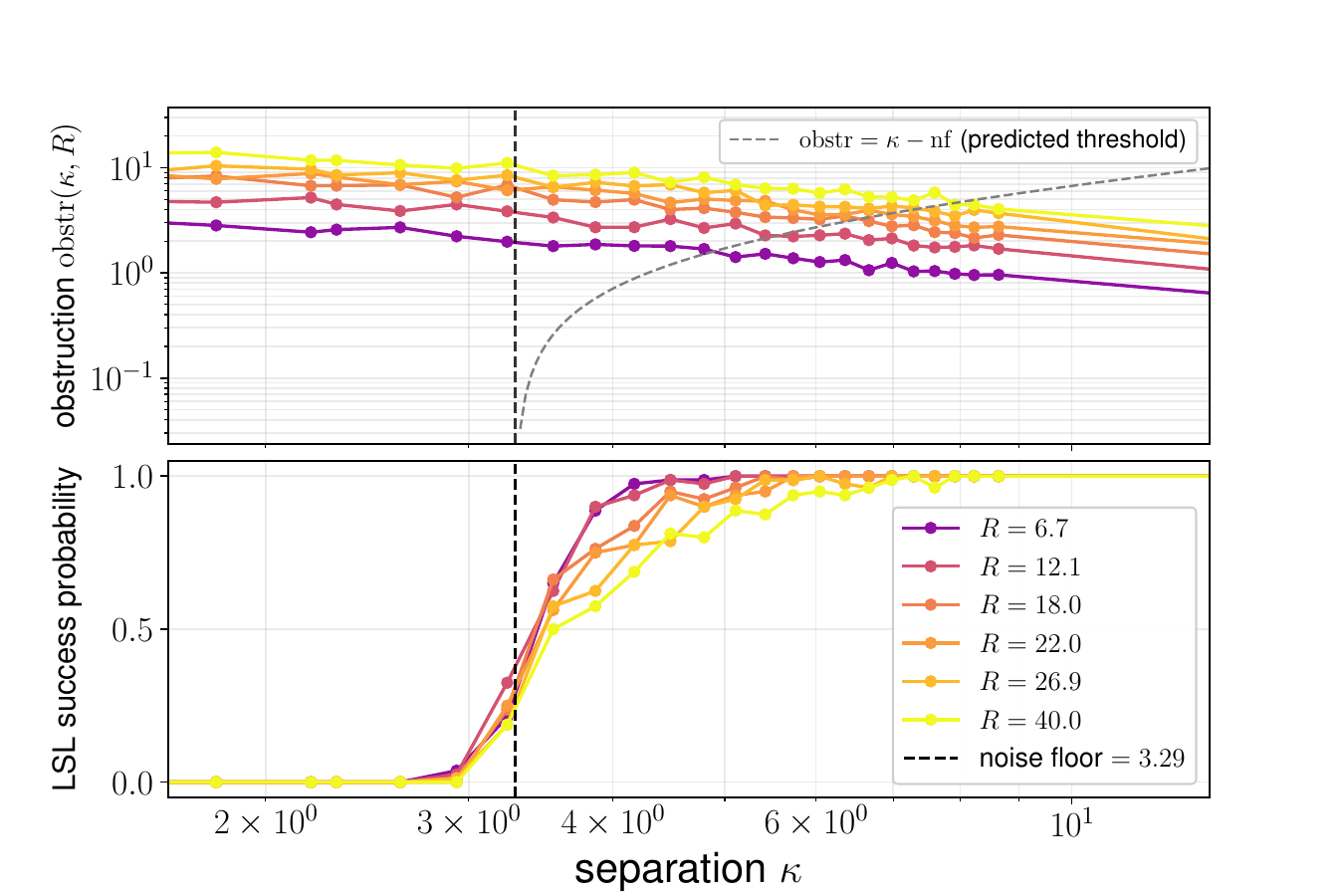}
    \caption{\textit{(Top)} Empirical obstruction $\mathrm{obstr}(\kappaZ, R) =
    \etau\max_{i}\|\mu_i\|_2/s_i$ for several values of $R$, against the predicted threshold
    line (dashed). \textit{(Bottom)} Corresponding LSL
    success curves on the same $\kappaZ$ axis. For each $R$, success rises to one just past the
    $\kappaZ$ at which its obstruction curve falls below the threshold line; larger $R$ (hence
    larger $\rhosigma$) crosses later, widening the failure region. The vertical dashed line is
    the noise floor $\mathrm{nf} \approx (d\log n)^{1/4}$.}
    \label{fig:lsl_lb_slices}
\end{figure}

%% file: uai-sections/Conclusion.tex
\section{Conclusion}\label{sec:conclusion}

We have showed that the matching problem under mild heteroscedasticity and a structural assumption that the true feature vectors are connected by an unknown scalar affine transformation is no harder, in a minimax sense, than that of identity transformation.
In other words, we showed that it is indeed possible to learn a transformation of the form $\tau I_d + \beta$ between two noisy sequences $\bX$ and $\bXdiese$ sufficiently well in order to perfectly recover the underlying matching $\pi^*$.
Moreover, the minimax rate of the minimal separation distance remains the same, matching, up to multiplicative constants, the lower bound in the non-affine setting.
However, in the general heterogeneous setting, where the noise magnitudes are allowed to grow with sample size $n$, we showed that the rate changes and scales linearly with the square root of the maximal ratio of noise variances.
This phenomenon is rooted in the estimation error of the scaling factor which depends on the maximal ratio of noise variances. Therefore, a larger minimal separation distance is required to be able to perfectly recover the unknown permutation.
Interestingly, learning an unknown scalar affine transformation is equivalent to standardizing the sequences $\bX$ and $\bXdiese$, that is, subtracting the sample mean and dividing by the square root of the total sample variance.

%% file: uai-sections/Appendix.tex
\appendix

\section{Postponed proofs from Section \ref{sec:main}}\label{app:A}
We first state a few standard lemmas that are used to control the tails of Gaussian and $\chi^2$ distributions. 
\begin{lemma}\label{lem:massart}{\citep[Lemma 4.1]{LaurentMassart2000}}
        Let $(Y_1, \ldots, Y_n)$ be i.i.d. Gaussian random variables with mean $0$ and variance $1$. Let $\ba = \brr{a_1, \ldots, a_n}$ be a collection of nonnegative constants. Let $T$ be the weighted sum, defined as 

    \begin{align}
        T = \sum_{i=1}^n a_i(Y_i^2 - 1).
    \end{align}
    Then, the following inequalities hold for any positive $x$
    \begin{align}
        \mathbb{P}(T \geq 2\norm{a}_2 \sqrt{x} + 2\norm{a}_{\infty} x) \leq \exp(-x), \qquad
        \mathbb{P}(T \leq -2\norm{a}_2 \sqrt{x}) \leq \exp(-x).
    \end{align}
\end{lemma}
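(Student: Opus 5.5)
We follow the classical Cramér--Chernoff approach: bound the log-moment generating function of each centered term $Y_i^2-1$, sum over $i$ using independence, and then invert the resulting sub-gamma bound. We treat the two tails separately.

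\emph{Step 1 (upper tail MGF).} For $0 \le u < 1/2$ we have $\log \Exp e^{u(Y^2-1)} = -u - \tfrac12\log(1-2u) = \sum_{k\ge 2} \frac{(2u)^k}{2k}$. Since $\frac{1}{2k} \le \frac14$ for $k \ge 2$, this gives
\begin{align}
\log \Exp e^{u(Y^2-1)} \le u^2 \sum_{j\ge 0}(2u)^j = \frac{u^2}{1-2u}.
\end{align}
Applying this with $u = t a_i$ for $0 \le t < 1/(2\norm{a}_\infty)$ and summing over $i$ by independence yields
\begin{align}
\log \Exp e^{tT} \le \sum_{i=1}^n \frac{a_i^2 t^2}{1-2a_i t} \le \frac{\norm{a}_2^2\, t^2}{1-2\norm{a}_\infty t}.
\end{align}
This is a sub-gamma bound with variance factor $v = 2\norm{a}_2^2$ and scale $c = 2\norm{a}_\infty$, in the normalization $\psi(t) \le \frac{v t^2}{2(1-ct)}$.

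\emph{Step 2 (inversion for the upper tail).} By Markov's inequality, $\mathbb{P}(T \ge y) \le \exp\bigl(-\sup_t \{ty - \psi(t)\}\bigr)$. We need the standard fact that, for the sub-gamma bound $\psi(t) \le \frac{vt^2}{2(1-ct)}$, the Legendre transform satisfies
\begin{align}
\sup_{0\le t<1/c}\Bigl\{ty - \frac{vt^2}{2(1-ct)}\Bigr\} \ge x \quad \text{whenever } y = \sqrt{2vx} + cx.
\end{align}
To verify it, take $t = \frac{1}{c}\Bigl(1 - \bigl(1+\tfrac{2cy}{v}\bigr)^{-1/2}\Bigr)$. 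A direct computation gives the value $\frac{v}{c^2}\,h\!\bigl(\frac{cy}{v}\bigr)$ with $h(w) = 1 + w - \sqrt{1+2w}$. The function $h$ has inverse $h^{-1}(z) = z + \sqrt{2z}$. Setting $z = c^2 x / v$ then recovers $y = \sqrt{2vx} + cx$. With $v = 2\norm{a}_2^2$ and $c = 2\norm{a}_\infty$ this threshold is exactly $y = 2\norm{a}_2\sqrt{x} + 2\norm{a}_\infty x$, which proves the first inequality.

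\emph{Step 3 (lower tail).} For $s \ge 0$ we have $\log \Exp e^{-s(Y^2-1)} = s - \tfrac12\log(1+2s)$. This function vanishes at $s=0$ and has derivative $\frac{2s}{1+2s} \le 2s$, so it is at most $s^2$. Hence $\log \Exp e^{-sT} \le \norm{a}_2^2 s^2$ for all $s \ge 0$, which is a purely sub-Gaussian bound with no restriction on $s$. Chernoff's bound then gives
\begin{align}
\mathbb{P}(T \le -y) \le \exp\!\bigl(-y^2/(4\norm{a}_2^2)\bigr),
\end{align}
and taking $y = 2\norm{a}_2\sqrt{x}$ yields the bound $e^{-x}$.

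The only step that is not immediate is the Legendre-transform inversion in Step 2. We plan to handle it through the explicit choice of $t$ and the closed form of $h^{-1}$ given above. Everything else reduces to the two elementary scalar inequalities used in Steps 1 and 3.
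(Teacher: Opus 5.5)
Your Cramér--Chernoff argument is correct and is essentially the original Laurent--Massart proof, which the paper invokes by citation without giving a proof of its own. The log-MGF bounds $u^2/(1-2u)$ and $s^2$ match theirs; the only difference is that you verify the sub-gamma Legendre-transform inversion explicitly (your choice of $t$ and $h^{-1}(z)=z+\sqrt{2z}$ are right) instead of citing Birg\'e--Massart, and your explicit $t$ needs $\norm{a}_\infty>0$, an unavoidable restriction since for $a=0$ one has $T\equiv 0$ and both stated inequalities fail.
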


\begin{lemma}[Gaussian concentration inequality]
    Let $(Y_1, \ldots, Y_n)$ be independent Gaussian random vectors in $\RR^d$ with mean $0$ and covariance matrices $\sigma_i^2 I_d$. Let $\ba = (a_1, \dots, a_n)$ be a collection of fixed arbitrary vectors in $\RR^d$. Let $T$ be the weighted sum, defined as 
    \begin{align}
        T = \sum_{i=1}^n a_i^\top Y_i.
    \end{align}
    Then, for any positive $x$ we have 
    \begin{align}
        \Prob(|T| \ge x) \le 2\exp(-x^2/(2\sum_{i=1}^n \sigma_i^2 \|a_i\|^2))
    \end{align}
\end{lemma}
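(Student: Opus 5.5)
The statement to prove is the Gaussian concentration lemma: for independent $Y_i \sim \mathcal{N}(0,\sigma_i^2 I_d)$ and fixed vectors $a_i \in \RR^d$, the sum $T = \sum_{i=1}^n a_i^\top Y_i$ satisfies
\begin{align}
\Prob(|T|\ge x) \le 2\exp\Big(-x^2/\big(2\textstyle\sum_{i=1}^n \sigma_i^2\|a_i\|^2\big)\Big).
\end{align}
The plan is to identify the exact distribution of $T$ and then apply a Chernoff bound.

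First, each term $a_i^\top Y_i$ is a linear functional of a Gaussian vector. It is therefore a centered univariate Gaussian with variance $a_i^\top(\sigma_i^2 I_d)a_i = \sigma_i^2\|a_i\|^2$. The terms are independent because the $Y_i$ are independent. Hence $T$ is centered Gaussian with variance $v := \sum_{i=1}^n \sigma_i^2\|a_i\|^2$. One can check this most cleanly through the moment generating function: by independence and the Gaussian MGF,
\begin{align}
\Exp[e^{sT}] = \prod_{i=1}^n \Exp[e^{s a_i^\top Y_i}] = \prod_{i=1}^n e^{s^2\sigma_i^2\|a_i\|^2/2} = e^{s^2 v/2}, \qquad s \in \RR.
\end{align}
If $v = 0$, then $T = 0$ almost surely and the bound is trivial for $x > 0$, with the convention $\exp(-\infty) = 0$. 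So we may assume $v > 0$.

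Second, apply Markov's inequality to $e^{sT}$ for $s > 0$:
\begin{align}
\Prob(T \ge x) \le e^{-sx}\,\Exp[e^{sT}] = \exp(-sx + s^2 v/2).
\end{align}
Minimizing the exponent at $s = x/v$ gives $\Prob(T\ge x)\le \exp(-x^2/(2v))$. Since $-T$ has the same distribution as $T$, the same bound holds for $\Prob(T \le -x)$. A union bound over the two tails yields the factor $2$ and the claimed inequality.

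There is no genuine obstacle here. The only points needing care are the degenerate case $v=0$ and the justification that the sum of independent Gaussians is Gaussian, which the MGF computation handles directly. Alternatively, one could note that $T/\sqrt{v}\sim\mathcal{N}(0,1)$ and cite the standard tail bound $\Prob(|N|\ge t)\le 2e^{-t^2/2}$; this reduces to the same Chernoff argument.
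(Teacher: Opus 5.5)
Your proof is correct. The MGF computation identifies $T$ as $\mathcal{N}(0,v)$ with $v=\sum_{i=1}^n\sigma_i^2\|a_i\|^2$, the Chernoff bound at $s=x/v$ gives the one-sided tail $\exp(-x^2/(2v))$, symmetry plus a union bound gives the factor $2$, and the degenerate case $v=0$ is handled properly. The paper states this lemma as a standard fact without proof, and your argument is the standard justification it relies on; note that the proof of \Cref{prop:mean-diff} actually uses your intermediate one-sided bound, at level $\delta/2$, to obtain its $\sqrt{2\log(2/\delta)}$ factor.
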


\subsection{Proof of Proposition \ref{prop:mean-diff}}\label{sub:proof_of_prop_mean_diff}

\begin{proof}
Recall that $Z_i$ are Gaussian vectors with parameters $(\mu_i, s_i^2 I_d)$, and decompose them into $Z_i = \mu_i + \Xibar_{i,n}$, where
\begin{align}
    \Xibar_{i,n} := \sigma_i \xi_i - \frac{1}{n}\sum_{k=1}^n \sigma_k\xi_k.
\end{align}
Then the sum of squared $\ell_2$ norms of $Z_1, \ldots, Z_n$ can be decomposed as
\begin{align}
    \sum_{i=1}^n \|Z_i\|_2^2
    = \| \bmu \|_2^2 + 2\sum_{i=1}^n \sigma_i\mu_i^\top \xi_i + \sum_{i=1}^n  \| \Xibar_{i,n}\|_2^2,
\end{align}
where we used that summing $\mu_i^\top \Xibar_{i,n}$ over $[n]$ is the same as summing $\sigma_i \mu_i^\top \xi_i$ over the same set, since $\sum_{i=1}^n \mu_i = 0$. We bound the last two terms separately.

The term $\sum_{i=1}^n \sigma_i\mu_i^\top \xi_i$ has a Gaussian distribution with mean zero and variance $\sum_{i=1}^n \sigma_i^2 \| \mu_i\|_2^2 \le \sigma_{\max}^2\|\bmu\|_2^2$. Thus, by the Gaussian concentration inequality, with probability at least $1-\delta/2$,
\begin{align}
    \sum_{i=1}^n \sigma_i \mu_i^\top \xi_i \le \sigma_{\max}\|\bmu\|_2\sqrt{2\log(2/\delta)}.
\end{align}
As for the term $\sum_{i=1}^n \|\Xibar_{i, n}\|_2^2$, we write it as a quadratic form in the underlying noise. Stack the noise vectors into $\bxi := (\xi_1^\top, \ldots, \xi_n^\top)^\top \sim \mathcal{N}(0, I_{nd})$, and let $D := \mathrm{diag}(\sigma_1, \ldots, \sigma_n)$ scale the $n$ blocks while $P := I_n - \frac{1}{n}\mathbf{1}\mathbf{1}^\top$ is the centering projection that subtracts the block average. The two operations act blockwise through the Kronecker product: $(D\otimes I_d)\bxi$ stacks the scaled vectors $\sigma_i\xi_i$, and applying $P\otimes I_d$ subtracts their average, so that
\begin{align}
    \big[(PD\otimes I_d)\bxi\big]_i = \sigma_i\xi_i - \frac{1}{n}\sum_{k=1}^n \sigma_k\xi_k = \Xibar_{i,n}.
\end{align}
Therefore $\sum_{i=1}^n \|\Xibar_{i, n}\|_2^2 = \big\|(PD\otimes I_d)\bxi\big\|_2^2 = \bxi^\top (PD\otimes I_d)^\top (PD\otimes I_d)\,\bxi$. By the Kronecker identities $(M\otimes I_d)^\top = M^\top\otimes I_d$ and $(M_1\otimes I_d)(M_2\otimes I_d) = (M_1M_2)\otimes I_d$, together with the symmetry of $D$ and $P$ and the idempotence $P^2 = P$, the inner matrix equals $(PD)^\top(PD)\otimes I_d = (DPD)\otimes I_d$. Moreover, $DPD = D^\top P D$ is a congruence of the projection $P\succeq 0$ and is therefore positive semi-definite. Writing $A := (DPD)\otimes I_d$, we conclude that
\begin{align}
    \sum_{i=1}^n \|\Xibar_{i, n}\|_2^2 = \bxi^\top A \bxi, \qquad A \succeq 0.
\end{align}
Using $\mathrm{tr}(M\otimes I_d) = d\,\mathrm{tr}(M)$, $\|M\otimes I_d\|_F = \sqrt{d}\,\|M\|_F$ and $\|M\otimes I_d\|_{\mathrm{op}} = \|M\|_{\mathrm{op}}$, a direct computation yields
\begin{align}
    \mathrm{tr}(A) &= d\,\mathrm{tr}(PD^2) = d\,\frac{n-1}{n}\|\bsigma\|_2^2,\\
    \|A\|_F^2 &= d\,\mathrm{tr}\big((DPD)^2\big) = d\Big(\frac{n-2}{n}\|\bsigma\|_4^4 + \frac{1}{n^2}\|\bsigma\|_2^4\Big) \le d\,\|\bsigma\|_4^4,\\
    \|A\|_{\mathrm{op}} &= \|DPD\|_{\mathrm{op}} \le \|D\|_{\mathrm{op}}^2\|P\|_{\mathrm{op}} = \sigma_{\max}^2,
\end{align}
where the bound on $\|A\|_F^2$ uses $\|\bsigma\|_2^4 \le n\|\bsigma\|_4^4$. We now reduce the quadratic form $\bxi^\top A \bxi$ to a weighted sum of squared Gaussians. Since $A$ is symmetric and positive semi-definite, the spectral theorem gives $A = U\Lambda U^\top$, where $U$ is orthogonal and $\Lambda = \mathrm{diag}(\lambda_1, \ldots, \lambda_{nd})$ collects the eigenvalues $\lambda_k \ge 0$. Let $\eta := U^\top \bxi$. Since $\bxi \sim \mathcal{N}(0, I_{nd})$ and $U$ is orthogonal, $\eta$ is again centered Gaussian with covariance $U^\top U = I_{nd}$, so $\eta \sim \mathcal{N}(0, I_{nd})$ and its coordinates $\eta_1, \ldots, \eta_{nd}$ are independent $\mathcal{N}(0,1)$ variables. In this eigenbasis,
\begin{align}
    \bxi^\top A \bxi = \eta^\top \Lambda \eta = \sum_{k=1}^{nd} \lambda_k \eta_k^2
    \qquad \text{and} \qquad
    \mathrm{tr}(A) = \sum_{k=1}^{nd} \lambda_k,
\end{align}
so that
\begin{align}
    \bxi^\top A \bxi - \mathrm{tr}(A) = \sum_{k=1}^{nd} \lambda_k(\eta_k^2 - 1).
\end{align}
The right-hand side is a weighted sum of independent centered chi-squared variables with nonnegative weights $a_k = \lambda_k$, so \Cref{lem:massart} applies. Its norms are exactly the matrix norms computed above, namely $\sum_{k} a_k^2 = \sum_k \lambda_k^2 = \|A\|_F^2$ and $\max_k a_k = \max_k \lambda_k = \|A\|_{\mathrm{op}}$. Applying the upper bound of \Cref{lem:massart} with $x = \log(2/\delta)$, we have with probability at least $1-\delta/2$,
\begin{align}
    \sum_{i=1}^n \|\Xibar_{i, n}\|_2^2
    \le d\,\frac{n-1}{n}\|\bsigma\|_2^2 + 2\|\bsigma\|_4^2\sqrt{d\log(2/\delta)} + 2\sigma_{\max}^2\log(2/\delta).
\end{align}
Combining these two bounds via the union bound, and using $d\,\frac{n-1}{n}\|\bsigma\|_2^2 \le d\|\bsigma\|_2^2$, we arrive at the following upper bound, which holds with probability at least $1-\delta$:
\begin{align}
    \sum_{i=1}^n \|Z_i\|_2^2
    &\le \|\bmu\|_2^2 + d\|\bsigma\|_2^2
    + 2\sigma_{\max}\|\bmu\|_2\sqrt{2\log(2/\delta)}
    + 2\|\bsigma\|_4^2\sqrt{d\log(2/\delta)}
    + 2\sigma_{\max}^2 \log(2/\delta),
\end{align}
completing the first part of the proof.

To prove the lower bound, we apply the lower-bound parts of the Gaussian concentration inequality and \Cref{lem:massart}. With probability at least $1-\delta/2$,
\begin{align}
    \sum_{i=1}^n \sigma_i \mu_i^\top \xi_i \ge -\sigma_{\max}\|\bmu\|_2\sqrt{2\log(2/\delta)},
\end{align}
and also with probability at least $1-\delta/2$,
\begin{align}
    \sum_{i=1}^n \|\Xibar_{i, n}\|_2^2
    \ge d\,\frac{n-1}{n}\|\bsigma\|_2^2 - 2\|\bsigma\|_4^2\sqrt{d\log(2/\delta)}.
\end{align}
Combining these via the union bound and using $d\,\frac{n-1}{n}\|\bsigma\|_2^2 = d\|\bsigma\|_2^2 - \frac{d}{n}\|\bsigma\|_2^2$, we obtain that with probability at least $1-\delta$,
\begin{align}
    \sum_{i=1}^n \|Z_i\|_2^2
    &\ge \|\bmu\|_2^2 + d\|\bsigma\|_2^2
    - 2\sigma_{\max}\|\bmu\|_2\sqrt{2\log(2/\delta)}
    - 2\|\bsigma\|_4^2\sqrt{d\log(2/\delta)}
    - \frac{d}{n}\|\bsigma\|_2^2,
\end{align}
which completes the proof.
\end{proof}

\subsection{Proof of Theorem \ref{thm:tau-affine}}\label{sub:proof_of_thm_tau_affine}

Recall the definition of $\hat{\tau}_n$ from \eqref{eq:intro-tau-hat}:
\begin{align}
    \hat{\tau}_n 
    :=  \brr{\frac{\sum_{i=1}^n \| X_i - \bar{X}_n\|_2^2}{\sum_{i=1}^n \| \Xdiese_i - \Xbardiese_n\|_2^2}}^{1/2} = \brr{\frac{\sum_{i=1}^n \| Z_i\|_2^2}{\sum_{i=1}^n \| \Zdiese_i\|_2^2}}^{1/2}.
\end{align}
Then,
\begin{align}
\frac{\hat{\tau}_n^2 }{\taustar^2}- 1  = \frac{\sum_{i=1}^n (\| Z_i\|_2^2 - \| \Zdiese_i\|_2^2)}{\taustar^2\sum_{i=1}^n \| \Zdiese_i\|_2^2} .
\end{align}

The proof consists of simultaneously upper and lower bounding the numerator and the denominator, then combining these bounds using the union bound. 
In what follows we apply \Cref{prop:mean-diff} four times: upper and lower bounds for both centralized sequences $\bZ$ and $\bZdiese$ in each of the cases $\tauhat_n > \taustar$ and $\tauhat_n < \taustar$, yielding an upper bound for the absolute value of the difference between $\tauhat_n^2$ and $\taustar{}^2$. Notice that, for all $i \in [n]$ we have that $Z_i \perp \Zdiese_{\pi^*(i)}$, and $Z_i, \taustar \Zdiese_{\pi^*(i)} \sim \mathcal{N}(\mu_i, s_i^2 I_d)$. In what follows, without loss of generality, we assume $\taustar = 1$ for notational convenience.
Using the lower bound of \Cref{prop:mean-diff} for $\bZdiese$ we get that with probability at least $1-\delta$, 

\begin{align}\label{eq:LB-Zdiese}
    \sum_{i=1}^n \|\Zdiese_i\|_2^2
    &\ge \|\bmu\|_2^2 + d\|\bsigma\|_2^2 - 2\sigma_{\max}\|\bmu\|_2\sqrt{2\log(2/\delta)}
    - 2\|\bsigma\|_4^2\sqrt{d\log(2/\delta)} - \frac{d}{n}\|\bsigma\|_2^2.
    \end{align}
Similarly, for the upper bound we have, with probability at least $1-\delta$,

\begin{align}\label{eq:UB-Z}
    \sum_{i=1}^n \|Z_i\|_2^2
    &\le \|\bmu\|_2^2 + d\|\bsigma\|_2^2 + 2\sigma_{\max}\|\bmu\|_2\sqrt{2\log(2/\delta)} 
     + 2\|\bsigma\|_4^2\sqrt{d\log(2/\delta)} + 2\sigma_{\max}^2 \log(2/\delta),
    \end{align}
Both \eqref{eq:LB-Zdiese} and \eqref{eq:UB-Z} follow from \Cref{prop:mean-diff} applied at confidence level $\delta$.

Combining these bounds we get that
\begin{align}
    \sum_{i=1}^n \big(\|Z_i\|_2^2 - \| \Zdiese_i\|_2^2\big)  
    &\le 4\sigma_{\max}\|\bmu\|_2\sqrt{2\log(2/\delta)}
    + 4\|\bsigma\|_4^2\sqrt{d\log(2/\delta)}
    + 2\sigma_{\max}^2 \log(2/\delta) 
    + \frac{d}{n}\|\bsigma\|_2^2
\end{align}
with probability at least $1-2\delta$, holding simultaneously with \eqref{eq:LB-Zdiese}.
Thus, with probability at least $1-4\delta$,
\begin{align}
    \Big|\sum_{i=1}^n \big(\|Z_i\|_2^2 - \| \Zdiese_i\|_2^2\big)\Big|
    &\le 4\sigma_{\max}\|\bmu\|_2\sqrt{2\log(2/\delta)}
    + 4\|\bsigma\|_4^2\sqrt{d\log(2/\delta)}
    + 2\sigma_{\max}^2 \log(2/\delta) 
    + \frac{d}{n}\|\bsigma\|_2^2,
\end{align}
which holds simultaneously with \eqref{eq:LB-Zdiese}.
Recall that $\alpha_\sigma = \sigma_{\max}/\|\bsigma\|_2$ and $\lambda = \|\bmu\|_2/\|\bsigma\|_2$.
Dividing the previous display by $\|\bsigma\|_2^2$ and using the following identities
\begin{align}
    \frac{\sigma_{\max}\|\bmu\|_2}{\|\bsigma\|_2^2} = \alpha_\sigma\lambda,\quad
    \frac{\sigma_{\max}^2}{\|\bsigma\|_2^2} = \alpha_\sigma^2,\quad
    \frac{\|\bsigma\|_4^2}{\|\bsigma\|_2^2} \le \alpha_\sigma,
\end{align}
where the last follows from $\|\bsigma\|_4^4 \le \sigma_{\max}^2\|\bsigma\|_2^2$. Combining the linear cross-terms via $\sqrt{2}\lambda + \sqrt{d} \le \sqrt{2}(\lambda+\sqrt{d})$, we obtain
\begin{align}\label{eq:num-alpha}
    \Big|\sum_{i=1}^n \big(\|Z_i\|_2^2 - \|\Zdiese_i\|_2^2\big)\Big|
    &\le \|\bsigma\|_2^2 \bigg(
    4\alpha_\sigma(\lambda+\sqrt{d})\sqrt{2\log(2/\delta)}
    + 2\alpha_\sigma^2\log(2/\delta)
    + \frac{d}{n}
    \bigg).
\end{align}
Applying the same identities to the right-hand side of \eqref{eq:LB-Zdiese}, we get that with probability at least $1-\delta$
\begin{align}\label{eq:denom-alpha}
    \sum_{i=1}^n \|\Zdiese_i\|_2^2
    &\ge \|\bsigma\|_2^2\bigg(
    \lambda^2 + d
    - 2\alpha_\sigma(\lambda+\sqrt{d})\sqrt{2\log(2/\delta)}
    - \frac{d}{n}
    \bigg).
\end{align}

The conditions of the theorem ensures that the obtained lower bound \eqref{eq:denom-alpha} is indeed larger than the upper bound from \eqref{eq:num-alpha} for any $\lambda \ge 0$.
Notice that otherwise the consistent estimation of $\taustar$ is impossible. 
In order for $|\hat{\tau}_n^2/\taustar^2 - 1|$ to be smaller than one, the right-hand side of \eqref{eq:denom-alpha} must dominate that of \eqref{eq:num-alpha}, which is equivalent to
\begin{align}\label{eq:condition-n-delta-alpha}
    h(\lambda) := \lambda^2 + d
    - 6\alpha_\sigma(\lambda+\sqrt{d})\sqrt{2\log(2/\delta)}
    - 2\alpha_\sigma^2\log(2/\delta)
    - \frac{2d}{n} \ge 0,
\end{align}
for every $\lambda \ge 0$. 
The minimum of this quadratic in $\lambda$, attained at $\lambda^\star = 3\alpha_\sigma\sqrt{2\log(2/\delta)} \ge 0$, gives
\begin{align}
    h(\lambda^\star)
    = d - 20\alpha_\sigma^2\log(2/\delta)
    - 6\alpha_\sigma\sqrt{2d\log(2/\delta)}
    - \frac{2d}{n}.
\end{align}
By the AM-GM inequality, $6\alpha_\sigma\sqrt{2d\log(2/\delta)} = \sqrt{d\cdot 72\alpha_\sigma^2\log(2/\delta)} \le d/2 + 36\alpha_\sigma^2\log(2/\delta)$, hence
\begin{align}
    h(\lambda^\star)
    \ge \frac{d}{2} - 56\alpha_\sigma^2\log(2/\delta) - \frac{2d}{n}.
\end{align}
Now given that $n \ge 8$ and $\log(2/\delta) \le d/(224\alpha_\sigma^2)$, one can check that
\begin{align}
    56\alpha_\sigma^2\log(2/\delta) \le \frac{d}{4},\quad
    \frac{2d}{n} \le \frac{d}{4},
\end{align}
and therefore $h(\lambda^\star) \ge 0$.
Since for any positive $a$ and $b$ such that $a \le b$ and $x > 0$ we have $\frac{a}{b} \le \frac{a+x}{b+x}$, we arrive at the following bound for the estimation error of $\taustar$, with probability $1-4\delta$
\begin{align}\label{eq:thm1-tau-error-alpha}
    \Big|\tauhat_n^2 - 1\Big|
    &= \frac{\Big|\sum_{i=1}^n \|Z_i\|_2^2 - \sum_{i=1}^n \|\Zdiese_i\|_2^2\Big|}
           {\sum_{i=1}^n \|\Zdiese_i\|_2^2} \\
    &\le \frac{
        4\alpha_\sigma(\lambda+\sqrt{d})\sqrt{2\log(2/\delta)}
        + 2\alpha_\sigma^2\log(2/\delta)
        + n^{-1}d
    }{
        \lambda^2 + d - 2\alpha_\sigma(\lambda+\sqrt{d})\sqrt{2\log(2/\delta)} - n^{-1}d
    } \\
    &\le \frac{
        6\alpha_\sigma(\lambda+\sqrt{d})\sqrt{2\log(2/\delta)}
        + 2\alpha_\sigma^2\log(2/\delta)
        + 2n^{-1}d
    }{\lambda^2 + d} \\
    &\le 12\alpha_\sigma\sqrt{\frac{\log(2/\delta)}{\lambda^2 + d}}
    + \frac{2\alpha_\sigma^2\log(2/\delta) + 2n^{-1}d}{\lambda^2 + d},
\end{align}
where we used $\lambda + \sqrt{d} \le \sqrt{2(\lambda^2 + d)}$ in the final step, concluding the proof.

\subsection{Proof of Theorem \ref{thm:pi_recovery}}\label{sub:proof_of_thm_pi_recovery}

We first state two key lemmas that are used to control the tails of standard Gaussian and normalized $\chi^2$ distributions. Their proofs are postponed to Sections \ref{sec:proof-lem-zeta-1} and \ref{sec:proof-lem-zeta-2}, respectively. 

\begin{lemma}\label{lem:zeta1_bound}
Let $\eta_1, \ldots, \eta_n$ and $\eta^\prime_1, \ldots, \eta^\prime_n$   be two mutually independent sets of $d$-dimensional Gaussian random vectors, with $\eta_i, \eta^\prime_i \sim N(0, \omega_i^2 I_d)$. 
Notice that $\eta_i$ and $\eta_j$ may be correlated. 
Fix a set of unit vectors $u_{i,j} \in \mathbb{S}^{d-1}$, with $i,j\in[n]$ and $\varepsilon \in (0, 1/2)$. 
Define
\begin{equation}
\Phi_1 := \sup_{a \in [1-\varepsilon, 1+\varepsilon]} \max_{i,j} \left|\frac{u_{i,j}^\top(\eta_i - a \eta^\prime_j)}{\sqrt{\omega_i^2 + a^2\omega_j^2}}\right|.
\end{equation}
Then for any $\delta \in (0,1)$, with probability at least $1-\delta$,
\[
\Phi_1 \leq \sqrt{2 \log\left(12 n^3 / \delta\right)} + \frac{2\varepsilon}{n} \sqrt{\log(8n/\delta)}.
\]
\end{lemma}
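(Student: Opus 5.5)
The plan is a discretization (chaining-free net) argument in the scalar parameter $a$. The key observation is that for every fixed $a$ and every pair $(i,j)$, the variables $\eta_i$ and $\eta^\prime_j$ are independent. This holds even when $i=j$, because the two families are mutually independent, and it does not matter that $\eta_i$ and $\eta_k$ may be correlated. Hence $u_{i,j}^\top(\eta_i - a\eta^\prime_j)\sim N(0,\omega_i^2+a^2\omega_j^2)$, so the normalized quantity
\[
f_{i,j}(a) := \frac{u_{i,j}^\top(\eta_i - a\eta^\prime_j)}{\sqrt{\omega_i^2 + a^2\omega_j^2}}
\]
is exactly standard Gaussian. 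Correlations only affect joint laws, and a union bound never uses them. This is why the lemma holds for arbitrarily correlated $\eta_i$.

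\emph{Step 1 (grid).} Take an equispaced grid $\mathcal{A}\subset[1-\varepsilon,1+\varepsilon]$ with step $2\varepsilon/n$, so that $|\mathcal{A}|\le n+1\le 2n$ and every $a$ lies within $\varepsilon/n$ of a grid point $a_0$. The Gaussian tail bound $\Prob(|N(0,1)|\ge t)\le 2e^{-t^2/2}$, combined with a union bound over $n^2$ pairs and at most $2n$ grid points, gives the following with probability at least $1-\delta/2$:
\[
\max_{a_0\in\mathcal{A}}\max_{i,j}|f_{i,j}(a_0)|\le\sqrt{2\log(8n^3/\delta)}.
\]
This is at most the first term $\sqrt{2\log(12n^3/\delta)}$; the slack in the constant absorbs the bookkeeping.

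\emph{Step 2 (Lipschitz control in $a$).} Write $x=u_{i,j}^\top\eta_i$, $y=u_{i,j}^\top\eta^\prime_j$ and $S(a)=\sqrt{\omega_i^2+a^2\omega_j^2}$. Differentiating gives
\[
f'(a) = -\frac{y}{S} - f(a)\,\frac{a\omega_j^2}{S^2}.
\]
Since $S\ge a\omega_j\ge\omega_j/2$ and $a\omega_j^2/S^2\le 1/a\le 2$, we get
\[
|f'(a)|\le 2|y|/\omega_j + 2\sup_a|f|.
\]
Integrating over a distance at most $\varepsilon/n$ and writing $F:=\sup_a\max_{i,j}|f_{i,j}(a)|$ yields
\[
F\le \max_{\mathcal{A}}|f| + \frac{\varepsilon}{n}\Big(2\max|y/\omega_j| + 2F\Big).
\]
Because $2\varepsilon/n<1/n\le 1/8$ is small, the $F$ term on the right is absorbed into the left-hand side. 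This is a self-bounding step, so no a priori finiteness issue arises beyond the continuity of $f$.

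\emph{Step 3 (the perturbation variable).} It remains to bound $\max|u_{i,j}^\top\eta^\prime_j|/\omega_j$ with probability at least $1-\delta/2$; each of these variables is standard Gaussian. This is the step I expect to be the main obstacle, because the stated bound carries only $\log(8n/\delta)$, i.e.\ a maximum over roughly $n$ variables rather than $n^2$, while $u_{i,j}$ depends on $i$. I would first try to exploit the structure of the intended application, where the direction effectively depends only on $j$ (or can be reduced to one), giving $n$ standard Gaussians and the bound $\sqrt{2\log(8n/\delta)}$ directly. If that reduction is not available, I would union bound over all $n^2$ pairs, which costs only a constant inside the logarithm and is harmless at the scale of the first term. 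The constants can then be rebalanced so that the $2\varepsilon/n$ factor multiplies this square-root term.

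\emph{Conclusion.} Combining Steps 1--3 by a union bound gives total failure probability at most $\delta$. The leading term comes from the grid maximum, and the correction term is of order $\frac{2\varepsilon}{n}\sqrt{\log(8n/\delta)}$, as stated.
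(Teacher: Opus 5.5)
Your architecture (a grid in $a$, a union bound over the $n^2$ pairs at grid points, and Lipschitz control through a Gaussian maximum) is the same as the paper's. The gap is in Step 2: as you set it up, it does not deliver the explicit constants. For your choices, the claims that ``the slack in the constant absorbs the bookkeeping'' and that the constants ``can be rebalanced'' are false.

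Your self-bounding inequality gives $F\le(1-x)^{-1}\big(\max_{\mathcal{A}}|f|+xY\big)$ with $x=2\varepsilon/n$. So the grid maximum itself is inflated by the factor $(1-x)^{-1}$. With your $\delta/2$--$\delta/2$ split, the coefficient of $\sqrt{\log(1/\delta)}$ in your bound tends to $\sqrt{2}(1+x)/(1-x)$ as $\delta\to0$. The lemma allows only $\sqrt{2}+x$, and $\sqrt{2}(1+x)>(\sqrt{2}+x)(1-x)$ for every $x\in(0,1)$. The gap between $12n^3$ and $8n^3$ buys only an additive $O(1/\sqrt{\log(1/\delta)})$. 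Hence, for every fixed $n,\varepsilon$, your bound exceeds the stated one once $\delta$ is small. For example, $n=8$, $\varepsilon=0.49$, $\delta=10^{-10}$ gives roughly $10$ from your argument versus $8.6$ in the lemma.

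The cause is that you bound $f'$ through $f$ itself. The missing idea is to expand the derivative completely, after which the $f$-term cancels:
\[
f'_{i,j}(a)=-\frac{\omega_i\omega_j\big(\omega_i\,g'_{i,j}+a\,\omega_j\,g_{i,j}\big)}{(\omega_i^2+a^2\omega_j^2)^{3/2}},\qquad g_{i,j}:=\frac{u_{i,j}^\top\eta_i}{\omega_i},\quad g'_{i,j}:=\frac{u_{i,j}^\top\eta'_j}{\omega_j}.
\]
Moreover, $\omega_i\omega_j(\omega_i+a\omega_j)/(\omega_i^2+a^2\omega_j^2)^{3/2}\le 1/(\sqrt{2}a)\le\sqrt{2}$, by AM--GM on $\omega_i\cdot a\omega_j$ and Cauchy--Schwarz on $\omega_i+a\omega_j$. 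Hence $|f'_{i,j}(a)|\le\sqrt{2}\,M$ with $M:=\max_{i,j}\max(|g_{i,j}|,|g'_{i,j}|)$. This bound does not involve $f$, so the grid maximum enters with coefficient exactly one and the discretization error is purely additive. This is the paper's route.

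Your Step 3 concern is legitimate, and the paper's proof in fact glosses over it. It writes $g_i=u_{i,j}^\top\eta_i/\omega_i$, suppressing $j$, and takes $M_g$ over only $2n$ variables. Your first remedy is not available: in the proof of \Cref{thm:pi_recovery} the directions are $u_{i,j}\propto\mu_i-\hat\tau_n\mu_j$, which depend on both indices. Your fallback, ``harmless at the scale of the first term,'' is an order-of-magnitude remark, not a proof of the stated inequality.

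With the sharp derivative bound, the exact constants do close:
\begin{itemize}
\item Take $M$ over all $2n^2$ variables. Then $M\le\sqrt{2\log(8n^2/\delta)}$ with probability $1-\delta/2$, and $\sqrt{2}M\le 2\sqrt{2}\sqrt{\log(8n/\delta)}$ because $\log(8n^2/\delta)\le 2\log(8n/\delta)$.
\item Use the grid of spacing $\varepsilon/n$. It has $2n+1\le 3n$ points, hence at most $3n^3$ grid variables, which yields the first term $\sqrt{2\log(12n^3/\delta)}$ at level $\delta/2$.
\item This grid has covering radius $\varepsilon/(2n)$, so the discretization error is at most $\frac{\varepsilon}{2n}\cdot 2\sqrt{2}\sqrt{\log(8n/\delta)}\le\frac{2\varepsilon}{n}\sqrt{\log(8n/\delta)}$.
\end{itemize}
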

\begin{lemma}\label{lem:zeta2_bound}
Let $\eta_1, \ldots, \eta_n$ and $\eta^\prime_1, \ldots, \eta^\prime_n$ be two mutually independent sets of $d$-dimensional Gaussian random vectors, with $\eta_i, \eta^\prime_i \sim N(0, \omega_i^2 I_d)$. 
Notice that $\eta_i$ and $\eta_j$ may be correlated. For some $\varepsilon \in (0, 1/2)$, define
\begin{equation}
\Phi_2 := \sup_{a \in [1-\varepsilon, 1+\varepsilon]} \max_{i,j} \left|\frac{\|\eta_i - a\eta^\prime_j\|_2^2}{\omega_i^2 + a^2\omega_j^2} - d\right|.
\end{equation}
Then for any $\delta \in (0,1)$, with probability at least $1-\delta$,
\[
\Phi_2 \leq 2\sqrt{d \log(12 n^3 / \delta)} + 2\log(12 n^3 / \delta) + \frac{4\varepsilon}{n} \left( d + 2\sqrt{d \log(4n/\delta)} + 2\log(4n/\delta) \right).
\]
\end{lemma}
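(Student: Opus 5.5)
The proof combines a union bound over a finite grid of values of $a$ with a Lipschitz interpolation in $a$ between grid points. The Lipschitz constant is controlled by a uniform bound on the normalized squared norms $\|\eta_i\|_2^2/\omega_i^2$ and $\|\eta'_j\|_2^2/\omega_j^2$.

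\emph{Fixed $a$ and fixed pair.} Fix $a \in [1-\varepsilon, 1+\varepsilon]$ and a pair $(i,j)$. The families $\{\eta_i\}$ and $\{\eta'_j\}$ are mutually independent, so
\begin{align}
\frac{\eta_i - a\eta'_j}{\sqrt{\omega_i^2 + a^2\omega_j^2}} \sim \mathcal{N}(0, I_d),
\end{align}
and its squared norm is $\chi^2_d$. The correlation between different $\eta_i$ is irrelevant here, since each pair is treated marginally. Both inequalities of \Cref{lem:massart}, applied with all weights equal to $1$, give
\begin{align}
\big|\chi^2_d - d\big| \le 2\sqrt{dx} + 2x
\end{align}
with probability at least $1-2e^{-x}$.

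\emph{Grid and union bound.} Cover $[1-\varepsilon, 1+\varepsilon]$ by a grid $\{a_k\}$ of mesh $2\varepsilon/n$, so that it has at most $n+1 \le 2n$ points and every $a$ lies within $\varepsilon/n$ of some $a_k$. Take a union bound over the $n^2$ pairs, the grid points and the two tails. The total number of events is at most $6n^3$, so choosing $x = \log(12n^3/\delta)$ gives probability at least $1-\delta/2$ that every grid point and every pair satisfy the deviation bound $2\sqrt{d\log(12n^3/\delta)} + 2\log(12n^3/\delta)$. This is exactly the first two terms of the claim.

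\emph{Uniform norm bound.} Apply the upper tail of \Cref{lem:massart} to each of the $2n$ variables $\|\eta_i\|_2^2/\omega_i^2$ and $\|\eta'_j\|_2^2/\omega_j^2$, each of which is $\chi^2_d$, and take a union bound at level $\delta/2$. With probability at least $1-\delta/2$,
\begin{align}
M := \max_{i,j}\Big\{\frac{\|\eta_i\|_2^2}{\omega_i^2}, \frac{\|\eta'_j\|_2^2}{\omega_j^2}\Big\} \le d + 2\sqrt{d\log(4n/\delta)} + 2\log(4n/\delta).
\end{align}

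\emph{Interpolation step (main obstacle).} Set $D(a) := \omega_i^2 + a^2\omega_j^2$ and $f(a) := \|\eta_i - a\eta'_j\|_2^2/D(a)$. Differentiating gives
\begin{align}
f'(a) = -\frac{2\eta_j'^\top(\eta_i - a\eta'_j)}{D(a)} - f(a)\,\frac{2a\omega_j^2}{D(a)}.
\end{align}
We bound the two terms separately:
\begin{itemize}
\item By Cauchy--Schwarz and $\|\eta_i - a\eta'_j\|_2^2 \le 2(\|\eta_i\|_2^2 + a^2\|\eta'_j\|_2^2) \le 2MD(a)$, we get $f(a) \le 2M$.
\item The first term is at most $2\omega_j\sqrt{M}\sqrt{2MD(a)}/D(a) = 2\sqrt{2}\,M\,\omega_j/\sqrt{D(a)}$.
\item In the second term, $2a\omega_j^2/D(a)$ is controlled using $a \ge 1/2$.
\end{itemize}
Altogether $|f'(a)| \le C M$ for an absolute constant $C$. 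The delicate part is to carry out this bookkeeping tightly enough that $C$ does not exceed $4$, as the stated coefficient $4\varepsilon/n$ requires; for example, one can exploit $\omega_j^2/D(a) \le 1/a^2$ and absorb constants. If $C$ came out larger, the argument would still go through with a worse numerical constant. Given $C = 4$, the mean value theorem yields
\begin{align}
|f(a) - f(a_k)| \le \frac{\varepsilon}{n}\cdot 4M
\end{align}
for the nearest grid point $a_k$. Combining the grid bound, the interpolation bound and the bound on $M$ on the intersection of the two events, which has probability at least $1-\delta$, gives the claimed bound on $\Phi_2$.
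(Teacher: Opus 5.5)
Your overall structure matches the paper's proof, and those parts are correct: a two-sided $\chi^2_d$ union bound over all pairs and grid points with $x=\log(12n^3/\delta)$, a union bound over the $2n$ normalized norms controlling $M$ at level $\delta/2$, and mean-value interpolation between grid points. The gap is in the interpolation step. You assume $|f'(a)|\le 4M$ rather than prove it, and the route you sketch cannot produce it. Bounding the two terms of your derivative separately gives $|f'(a)|\le (2\sqrt2+4)M/a\le (8+4\sqrt2)M$.

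No sharper estimate of the terms taken individually can reach $4M$. Let $\omega_i\to 0$ with $\|\eta'_j\|_2^2=M\omega_j^2$. Each term then tends to $2M/a$ in absolute value, and they cancel exactly; indeed $f(a)\to\|\eta'_j\|_2^2/\omega_j^2$, which does not depend on $a$. Any triangle-inequality bound is therefore at least $4M/a>4M$ for $a<1$. As written, your argument proves the lemma only with $4\varepsilon/n$ replaced by $(8+4\sqrt2)\varepsilon/n$.

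The missing idea is to combine the two terms before estimating. The paper does this by differentiating the normalized vector $v(a):=(\eta_i-a\eta'_j)/\sqrt{D(a)}$ instead of the ratio. Then $f=\|v\|_2^2$ and $f'=2v^\top v'$, and the $a^2\omega_j^2\eta'_j$ contributions cancel:
\begin{align}
v'(a)=-\frac{\omega_i^2\eta'_j+a\omega_j^2\eta_i}{D(a)^{3/2}},\qquad f'(a)=-\frac{2}{D(a)^2}(\eta_i-a\eta'_j)^\top\big(\omega_i^2\eta'_j+a\omega_j^2\eta_i\big).
\end{align}
The second formula is exactly what you get by putting your two terms over the common denominator $D(a)^2$.

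Now $\|v\|_2\le\sqrt{2M}$ and $\|v'\|_2\le\sqrt{M}\,\omega_i\omega_j(\omega_i+a\omega_j)/D(a)^{3/2}$. Set $x=\omega_i$ and $y=a\omega_j$; the ratio $xy(x+y)/(x^2+y^2)^{3/2}$ is at most $1/\sqrt2$, so $\|v'\|_2\le\sqrt{M}/(\sqrt2\,a)$. Hence $|f'(a)|\le 2M/a\le 4M$ for $a>1/2$. With this, the rest of your argument goes through and gives the stated constant.
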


\begin{proof}[Proof of \Cref{thm:pi_recovery}]
    
Without loss of generality, we assume $\pi^*$ is the identity map $\id$ and $\tau^* = 1$. 
Recall that the observations $\bZ$ and $\bZdiese$ are the centralized versions of $\bX$ and $\bXdiese$, respectively. Recall as well the objective function of the LSL estimator from $\eqref{eq:pi-tau-mle}$:
\begin{equation}
    \ell_n(\pi,\bZ,\bZdiese) = \sum_{i=1}^n \log \Big(\|Z_i - \tauhat_n\Zdiese_{\pi(i)}\|_2^2\Big),
\end{equation}
up to some permutation independent constants.
We aim to bound the probability of the event $\Omega = \{ \hat{\pi} \neq \id\}$, where $\hat{\pi} = \hat\pi^{\rm LSL}_n= \argmin_{\pi\in\Sn}\ell_n(\pi,\bZ,\bZdiese) $. 
Our primary observation is that 
\begin{equation}
    \Omega \subset \cup_{\pi \neq \id} \Omega_{\pi}, \eqtext{where}
    \Omega_{\pi} = \brc{\pi \in \argmin_{\pi\in\Sn} \ell_n(\pi,\bZ, \bZdiese)}.
\end{equation}
Then, the following chain of inclusions holds:
\begin{align}
    \Omega  &\subset \bigcup_{\pi \neq \id} \Omega_{\pi}
    \subset \bigcup_{\pi \neq \id} \brc{\ell_n(\pi,\bZ, \bZdiese) \leq \ell_n(\id,\bZ, \bZdiese)}  \\
    &\subset \bigcup_{\pi \neq \id} \bigg\{\sum_{i : \pi(i)\not= i}\!\! \log \frac{\|Z_i - \tauhat_n \Zdiese_i \|_2^2}{\|Z_i - \tauhat_n \Zdiese_{\pi(i)}\|_2^2}\ge 0\bigg\}.
\end{align}
Jensen's inequality yields the following for every $t>0$:
\begin{align}
    \sum_{\pi(i)\neq i} \log\left(\frac{(1+t^2)s_i^2}{s_i^2+t^2 s_{\pi(i)}^2}\right)
    &= \sum_{i=1}^n \big(\log((1+t^2)s_i^2) - \log(s_i^2+t^2 s_{\pi(i)}^2) \big)\\
    &= \sum_{i=1}^n \left( \frac{\log((1+t^2)s_i^2)+t^2\log((1+t^2)s_{\pi(i)}^2)}{1+t^2}-\log(s_i^2+t^2 s_{\pi(i)}^2) \right) \le 0.
\end{align}
Setting $t = \tauhat_n$ yields 
\begin{align}
    \Omega &\subset \bigcup_{\pi \neq \id} \bigg\{\sum_{i : \pi(i)\not= i} \log \frac{\|Z_i - \tauhat_n \Zdiese_i \|_2^2}
     {\|Z_i - \tauhat_n \Zdiese_{\pi(i)}\|_2^2} - \sum_{i : \pi(i)\not= i}\log\bigg(\frac{(1+\tauhat_n^2)s_i^2}{s_i^2 + \tauhat_n^2 s_{\pi(i)}^2}\bigg)\ge 0\bigg\} \\
    &\subset \bigcup_{i=1}^n \bigcup_{j\not =i}\bigg\{\frac{\|Z_i - \tauhat_n \Zdiese_i \|_2^2}{(1+\tauhat_n^2)s_i^2}\ge\frac{\|Z_i - \tauhat_n \Zdiese_{j}\|_2^2}{s_i^2 + \tauhat_n^2 s_{j}^2}\bigg\}. 
\end{align}
Define the random event $\Omega_{\tau} := \{\tauhat_n\in \cE_\tau := [1-\etau,1+\etau]\}$, where $\etau$ is the upper bound on the estimation error of $\tauhat_n$ derived in \Cref{thm:tau-affine}. 
Then, intersecting with the event $\Omega_{\tau}$ we get the following inclusion:
\begin{align}\label{eq:inclusions}
    {\Omega} &\subseteq  \brc{\bigcup_{i=1}^n \bigcup_{j\not =i}\bigg\{\frac{\|Z_i - \tauhat_n \Zdiese_i \|_2^2}{(1+\tauhat_n^2)s_i^2}\ge\frac{\|Z_i - \tauhat_n \Zdiese_{j}\|_2^2}{s_i^2 + \tauhat_n^2 s_{j}^2}\bigg\}\cap \Omega_{\tau}}\cup \Omega_\tau^{\sf c}.
\end{align}
In order to upper bound the probability of the event in \eqref{eq:inclusions} we first upper and lower bound the left and right hand sides of the inequality from \eqref{eq:inclusions}, respectively. 
In other words, for the correct pair we present an upper bound on the left hand side, while for the incorrect pair  we compute bound it from below. 
From the definitions of $Z_i$ and $\Zdiese_j$, for any pair $(i, j) \in [n]^2$ we have the following decomposition
\begin{align}\label{eq:diff-ij}
    Z_i - \tauhat_n \Zdiese_j = \mu_i - \tauhat_n\mu_j + \overline{\xi}_{i, n} - \tauhat_n\Xibardiese_{j,n},
\end{align}
where $\mu_i = \theta_i - \bar{\theta}_n$ is defined in \eqref{eq:def-mu-xibar} and 
\begin{equation}
     \Xibar_{i,n} \eqdef \sigma_i \xi_i - \frac{1}{n} \sum_{i=1}^n \sigma_i \xi_i, \quad \Xibardiese_{i,n} \eqdef  \sdiese_i \xidiese_i - \frac{1}{n} \sum_{i=1}^n \sdiese_i \xidiese_i.
\end{equation}
We now proceed with upper bounding the normalized distance for the correct pair:
\begin{align}
        \frac{\|Z_i - \tauhat_n \Zdiese_i \|_2^2}{(1+\tauhat_n^2)s_i^2} &= \frac{\| \mu_i - \tauhat_n\mu_i\|_2^2 + 2(\mu_i - \tauhat_n\mu_i)^\top(\Xibar_{i,n} - \tauhat_n\Xibardiese_{i,n}) + \|\Xibar_{i,n} - \tauhat_n\Xibardiese_{i,n}\|_2^2 }{(1+\tauhat_n^2)s_i^2} \\ 
    &\le \frac{\| \mu_i - \tauhat_n\mu_i\|_2^2}{(1+\tauhat_n^2)s_i^2} + \frac{2\zeta_1 \|\mu_i - \tauhat_n\mu_i\|}{s_i\sqrt{1+\tauhat_n^2}} + d + \sqrt{d}\zeta_2,
\end{align}
where  $\zeta_1$ and $\zeta_2$ are defined as 
\begin{align}
    \zeta_1 \eqdef \sup_{\tauhat_n \in \mathcal{E}_{\tau}}\max_{i, j} \bigg| \frac{(\mu_i - \tauhat_n\mu_j)^\top(\Xibar_{i,n} - \tauhat_n\Xibardiese_{j,n})}{\| \mu_i - \tauhat_n\mu_j\| \sqrt{s_i^2 + \tauhat_n^2 s_j^2}}\bigg|, \qquad \zeta_2 \eqdef d^{-1/2} \sup_{\tauhat_n \in \mathcal{E}_{\tau}} \max_{i, j} \bigg| \frac{\|\Xibar_{i, n} - \tauhat_n\Xibardiese_{j, n}\|_2^2}{s_i^2 + \tauhat_n^2 s_j^2} - d\bigg|.
\end{align}
The lower bound on the distance between incorrect pairs ($i \neq j$) is constructed in a similar manner:
\begin{align}
    \frac{\|Z_i - \tauhat_n \Zdiese_j \|_2^2}{s_i^2 + \tauhat_n^2 s_j^2}  
    &\ge \frac{\| \mu_i - \tauhat_n\mu_j\|_2^2}{s_i^2 + \tauhat_n^2 s_j^2} - \frac{2\zeta_1 \|\mu_i - \tauhat_n\mu_j\|}{\sqrt{s_i^2+\tauhat_n^2s_j^2}} + d - \sqrt{d}\zeta_2.
\end{align}
Thus, the first event from \eqref{eq:inclusions} satisfies the following chain of inclusions 
\begin{align}
    &{\bigcup_{i=1}^n \bigcup_{j\not =i}\bigg\{\frac{\|Z_i - \tauhat_n \Zdiese_i \|_2^2}{(1+\tauhat_n^2)s_i^2}\ge\frac{\|Z_i - \tauhat_n \Zdiese_{j}\|_2^2}{s_i^2 + \tauhat_n^2 s_{j}^2}\bigg\} 
    \cap \Omega_{\tau}} \\ 
    &\subseteq 
    {\bigcup_{i=1}^n \bigcup_{j\not =i}\bigg\{\frac{\| \mu_i - \tauhat_n\mu_i\|_2^2}{(1+\tauhat_n^2) s_i^2}
    +\frac{2\zeta_1 \|\mu_i - \tauhat_n\mu_i\|}{s_i\sqrt{1+\tauhat_n^2}} + 2\sqrt{d}\zeta_2
    \ge \frac{\| \mu_i - \tauhat_n\mu_j\|_2^2}{s_i^2 + \tauhat_n^2 s_j^2} - \frac{2\zeta_1 \|\mu_i - \tauhat_n\mu_j\|}{\sqrt{s_i^2+\tauhat_n^2s_j^2}} \bigg\} \cap\Omega_{\tau}}\\ 
    &\subseteq{\bigcup_{i=1}^n \bigcup_{j\not =i}\bigg\{\tilde{\kappa}_{i,i}^2 + 2\zeta_1(\tilde{\kappa}_{i,i} + \tilde{\kappa}_{i,j}) + 2\sqrt{d}\zeta_2 \ge \tilde{\kappa}_{i,j}^2\bigg\}\cap\Omega_{\tau}} ,
\end{align}
where we define the proxy separation distance $\tilde{\kappa}_{i,j}$ as follows
\begin{align}\label{eq:proxy-kappa}
    \tilde{\kappa}_{i, j}^2 \eqdef \frac{\| \mu_i - \tauhat_n\mu_j\|_2^2}{s_i^2 + \tauhat_n^2 s_j^2}, \quad \forall (i, j) \in [n]^2, \quad \textup{ and } \quad \tilde{\kappa}_{\min} \eqdef \min_{i \neq j} \tilde{\kappa}_{i,j}.
\end{align}
On the event $\Omega_{\tau}$, the estimator $\tauhat_n$ is close to $\tau^*=1$, and thus $\tilde{\kappa}_{\min}$ is close to $\kappaZ$, hence its name.
Therefore, the probability of the event $\Omega$ satisfies
\begin{equation}\label{eq:Omega-A-ij}
    \Omega 
    \subseteq \brc{\bigcup_{i=1}^n \bigcup_{j\not =i} \cA_{i,j} \cap\Omega_{\tau}} \bigcup \Omega_{\tau}^{\sf c},
\end{equation}
where $\cA_{i,j} := \bigg\{\tilde{\kappa}_{i,i}^2 + 2\zeta_1(\tilde{\kappa}_{i,i} + \tilde{\kappa}_{i,j}) + 2\sqrt{d}\zeta_2 \ge \tilde{\kappa}_{i,j}^2\bigg\}$. 
In order to upper bound the right-hand side of \eqref{eq:Omega-A-ij}, we then 
show that $\cA_{i,j} \cap \Omega_{\tau}$ for all distinct pairs $(i,j)$ is a subset of  an event, the probability of which is smaller than $2\delta$. We first establish the relation between $\tilde{\kappa}_{\min}$ and $\kappaZ$. For any $i\neq j$, the reverse triangle inequality gives $\|\mu_i-\tauhat_n\mu_j\|\ge\|\mu_i-\mu_j\|-|1-\tauhat_n|\,\|\mu_j\|\ge\|\mu_i-\mu_j\|-\etau\|\mu_j\|$, while $\sqrt{s_i^2+\tauhat_n^2 s_j^2}\le(1+\etau)\sqrt{s_i^2+s_j^2}$ on $\Omega_\tau$; dividing, using $\sqrt{s_i^2+s_j^2}\ge s_j$, and minimizing over $i\neq j$ yields
\begin{align}\label{eq:kappa-min-kappa-z}
    \tilde{\kappa}_{\min} \geq \frac{1}{1+\varepsilon_{\tau}}\Big(\kappaZ - \varepsilon_{\tau}\max_{i\in[n]}\frac{\|\mu_i\|_2}{s_i}\Big).
\end{align}
Consequently, given $\kappaZ \ge (3+2\varepsilon_\tau)\,\varepsilon_{\tau} \max_{i\in[n]} \| \mu_i\|_2/s_i$ we get $\tilde{\kappa}_{\min} \ge 2\varepsilon_{\tau} \max_{i\in[n]} \| \mu_i\|_2/s_i$. On the other hand, we have the following upper bound on $\tilde{\kappa}_{i,i}$:
\begin{align}
    \tilde{\kappa}_{i,i} = \frac{|1-\tauhat_n| \| \mu_i \|_2}{s_i\sqrt{1+ \tauhat_n^2}} \le \frac{\varepsilon_{\tau}\|\mu_i\|_2}{s_i}.
\end{align}
Hence, we deduce that the following inequality holds for every pair $i\neq j$:
\begin{equation}\label{eq:kappa-ii-up}
    \max_{i\in[n]} \tilde{\kappa}_{i,i} \le \varepsilon_{\tau} \max_{i\in[n]} \frac{\| \mu_i\|_2}{s_i} 
    \leq \frac{1}{2} \tilde{\kappa}_{\min}
    \leq \frac{1}{2} \tilde{\kappa}_{i,j}.
\end{equation}
Direct application of \eqref{eq:kappa-ii-up} yields
\begin{align}
    \cA_{i,j}\cap \Omega_{\tau}
    &\subseteq \bigg\{ 3 \zeta_1 \tilde{\kappa}_{i,j} + 2\sqrt{d}\zeta_2 \ge \frac{3}{4}\tilde{\kappa}_{i,j}^2 \bigg\} \cap \Omega_{\tau}.
\end{align}
Notice also that on the event $\Omega_{\zeta_1} := \{\tilde{\kappa}_{\min} \ge 2\zeta_1\}$, the larger $\tilde{\kappa}_{i,j}$ the smaller the probability of the event from the previous display. Therefore, if we show that the probability of this event can be controlled by $2\delta$ for $\tilde{\kappa}_{\min}$ it would follow that it holds for arbitrary distinct pairs $i\neq j$. Hence, we arrive at the following inclusion:
\begin{align}
    \Omega \cap \Omega_{\zeta_1} &\subseteq \brs{\bigcup_{i=1}^n \bigcup_{j\not =i} \cA_{i,j} \cap\Omega_{\tau} \cap \Omega_{\zeta_1}} \cup \Omega_{\tau}^{\sf c}
    \subseteq \Big( \Big\{ 3\zeta_1 \tilde{\kappa}_{\min} + 2\sqrt{d}\zeta_2 \ge \frac{3}{4} \tilde{\kappa}_{\min}^2 \Big\}\cap \Omega_{\tau}\Big) \cup \Omega_{\tau}^{\sf c}.
\end{align}
Combining above inclusions we can bound the probability of the event $\Omega$ as follows:
\begin{align}
    \Prob(\Omega) &\le \Prob(\Omega_{\zeta_1}^{\sf c}) + \Prob(\Omega_{\tau}^{\sf c}) + \Prob\brr{ \brc{3\zeta_1 \tilde{\kappa}_{\min} + 2\sqrt{d}\zeta_2 \ge \frac{3}{4} \tilde{\kappa}_{\min}^2} \cap \Omega_{\tau}} \\ 
    &\le \Prob(\zeta_1 \ge \tilde{\kappa}_{\min}/2) + \Prob(\Omega_{\tau}^{\sf c})
     + \Prob(\zeta_1 \ge \tilde{\kappa}_{\min}/8) 
     + \Prob\brr{ \brc{\zeta_2\sqrt{d} \ge \frac{3}{16} \tilde{\kappa}_{\min}^2 } \cap \Omega_{\tau}} \\ 
    &\le 2\Prob(\zeta_1 \ge \tilde{\kappa}_{\min}/8) 
    + \Prob\brr{ \brc{\zeta_2\sqrt{d} \ge \frac{3}{16} \tilde{\kappa}_{\min}^2 } \cap \Omega_{\tau}}
     + \delta, \label{eq:final-proba-sum}
\end{align}
where in the last step we used the result of \Cref{thm:tau-affine} to bound the probability of the event $\Omega_{\tau}^{\sf c}$.
Now, using \Cref{lem:zeta1_bound} and \Cref{lem:zeta2_bound}, we find the threshold for $\tilde{\kappa}_{\min}$ such that both probabilities from \eqref{eq:final-proba-sum} are bounded by $\delta$. 
To keep the expressions concise, we notice that the dominating terms from \Cref{lem:zeta1_bound,lem:zeta2_bound} do not scale with $\etau$. 
Hence, we replace the terms that do depend on $\etau$ by the corresponding dominating terms. 
Formally, $\Prob(\zeta_1 \ge \tilde{\kappa}_{\min}/8) \le \delta/2$ holds as soon as $\tilde{\kappa}_{\min}/8$ exceeds the bound of \Cref{lem:zeta1_bound}, for which it suffices that
\begin{align}
    \tilde{\kappa}_{\min} \ge 8 \sqrt{2 \log\left(24 n^3 / \delta\right)},
    \qquad\text{i.e.}\quad \tilde{\kappa}_{\min}^2 \ge 128\,\log(24 n^3/\delta).
\end{align}
Similarly, $\Prob(\{\zeta_2\sqrt{d} \ge \frac{3}{16} \tilde{\kappa}_{\min}^2\}\cap\Omega_\tau) \le \delta$ holds as soon as $\frac{3}{16}\tilde{\kappa}_{\min}^2$ exceeds the bound of \Cref{lem:zeta2_bound}, for which it suffices that
\begin{align}
    \frac{3\tilde{\kappa}_{\min}^2}{16}\ge 2\Big(\sqrt{d \log(12 n^3 / \delta)} + \log(12 n^3 / \delta)\Big),
    \qquad\text{i.e.}\quad \tilde{\kappa}_{\min}^2 \ge \tfrac{32}{3}\big(\sqrt{d \log(12 n^3 / \delta)} + \log(12 n^3 / \delta)\big).
\end{align}
Combining these two bounds we get that as soon as  $\tilde{\kappa}_{\min}^2 \ge \tfrac{32}{3}\sqrt{d\log(12n^3/\delta)} + 128\log(12n^3/\delta)$, then the first two terms of \eqref{eq:final-proba-sum} are bounded by $\delta/2$ and $\delta$, respectively. Now, using the condition $\tilde\kappa_{\min}\ge 2\etau\max_i\|\mu_i\|_2/s_i$ from \eqref{eq:kappa-ii-up} along with the triangle inequality, we obtain that a sufficient condition 
\begin{align}\label{eq:kappa-min-bound}
    \tilde{\kappa}_{\min} \ge \sqrt{\tfrac{32}{3}}\,\big(d \log(12 n^3 / \delta)\big)^{1/4} + 8\sqrt{2 \log\left(24 n^3 / \delta\right)} + 2\varepsilon_{\tau}\max_{i\in[n]} \frac{\|\mu_i\|_2}{s_i},
\end{align}
ensures that the probability of recovering the wrong permutation $\pi^* \equiv \id$ is bounded by $3\delta$, \ie $\Prob(\Omega) \le 3\delta$. Recall the relation between $\tilde{\kappa}_{\min}$ and $\kappaZ$ from \eqref{eq:kappa-min-kappa-z}, then using \eqref{eq:kappa-min-bound} we arrive at the following condition on $\kappaZ$: 
\begin{align}
    \kappaZ \ge (1+\varepsilon_\tau)\Big( \sqrt{\tfrac{32}{3}}\big(d\log(12n^3/\delta)\big)^{1/4} + 8\sqrt{2\log(24n^3/\delta)} \Big) + (3+2\varepsilon_\tau)\,\varepsilon_{\tau}\max_{i\in[n]}\frac{\|\mu_i\|_2}{s_i}.
\end{align}
Using the fact that $\varepsilon_{\tau} \le 1/2$ one can verify that the latter reduces to
\begin{align}\label{eq:thm3-condition}
    \kappaZ \ge {} & 5\,\big(d \log(12 n^3 / \delta)\big)^{1/4} + 17 \,\sqrt{\log(24 n^3 / \delta)} + 4\,\etau\max_{i\in[n]}\frac{\|\mu_i\|_2}{s_i},
\end{align}
where we didn't make much effort to optimize the numerical constants. To further bound the last term from the previous display we use the result of \Cref{thm:tau-affine}, and apply the straightforward inequalities $\max_{i \in [n]} \| \mu_i\|_2 \le \lambda \|\bsigma\|_2$ and $\min_{i \in [n]}s_i \ge \sigma_{\min} \sqrt{1 - 1/n}$. Namely, with probability at least $1-\delta$, we have 
\begin{align}
    \varepsilon_{\tau} \max_{i\in[n]}\frac{\|\mu_i\|_2}{s_i} &\le 12 \sqrt{\frac{\alpha_{\sigma}^2\log(4/\delta)}{\lambda^2 + d}} \cdot \frac{\lambda\|\bsigma\|_2}{\sigma_{\min}\sqrt{1 - 1/n}}
    = 12\sqrt{\frac{\rho_{\sigma}\lambda^2 \log(4/\delta)}{(1- 1/n)(\lambda^2+d)}} \le 13 \sqrt{\rho_{\sigma}\log(4/\delta)},
\end{align}
where in the last inequality we used $\lambda^2 \le \lambda^2 + d$ and the assumption $n \ge 8$. Substituting back the upper bound from the previous display into \eqref{eq:thm3-condition} and using the union bound we get that under the required condition on $\kappaZ$ we have $\Prob(\Omega) \le 4\delta$, concluding the proof.

\end{proof}

\section{Proofs of the technical lemmas}

\subsection{Proof of Lemma \ref{lem:zeta1_bound}}\label{sec:proof-lem-zeta-1}

Define $g_i = u_{i,j}^\top \eta_i / \omega_i$ and $g^\prime_j = u_{i,j}^\top \eta^\prime_j / \omega_j$. 
Since $u_{i,j}$ is a unit vector and $\eta_i, \eta^\prime_i \sim N(0, \omega_i^2 I_d)$, the variables $g_i, g^\prime_i \sim N(0, 1)$ are independent scalar Gaussians for all $i\in[n]$. 
For a fixed pair $(i,j)$, we define
\begin{equation}
f_{ij}(a) := \frac{\omega_i g_i - a \omega_j g^\prime_j}{\sqrt{\omega_i^2 + a^2\omega_j^2}} =
 \frac{ u_{i,j}^\top \eta_i - a u_{i,j}^\top \eta^\prime_j}{\sqrt{\omega_i^2 + a^2\omega_j^2}}.
\end{equation}
Since $g_i$ and  $g^\prime_j$ are independent, for any fixed $a > 0$ we obtain $f_{ij}(a) \sim N(0,1)$. 
Thus, we need to bound $\Phi_1 = \sup_{a \in [1-\varepsilon, 1+\varepsilon]} \max_{i,j} \left|f_{ij}(a)\right|$. 
In order to bound this supremum, we construct a net with radius $r$ on the interval $[1-\epsilon,1+\epsilon]$.
We first bound the derivative of $f_{ij}(a)$ with respect to $a$. A direct calculation yields:
\begin{equation}
f'_{ij}(a) = -\frac{\omega_i^2 \omega_j g^\prime_j + a \omega_j^2 \omega_i g_i}{(\omega_i^2 + a^2\omega_j^2)^{3/2}}.
\end{equation}
The absolute value of the latter satisfies
\begin{equation}
|f'_{ij}(a)| \leq \frac{\omega_i^2 \omega_j |g^\prime_j| + a \omega_j^2 \omega_i |g_i|}{(\omega_i^2 + a^2\omega_j^2)^{3/2}} \leq  \frac{\omega_i \omega_j (\omega_i + a \omega_j)}{(\omega_i^2 + a^2\omega_j^2)^{3/2}}  \times
{M_g},
\end{equation}
where $M_g := \max \brc{|g_1|,|g^\prime_1|,\ldots,|g_n|,|g^\prime_n|}$.
Applying Cauchy-Schwartz we obtain
\begin{equation}
    \frac{\omega_i \omega_j (\omega_i + a \omega_j)}{(\omega_i^2 + a^2\omega_j^2)^{3/2}} \leq \frac{1}{\sqrt{2}a}.
\end{equation}
Since $a \in [1-\varepsilon, 1+\varepsilon]$ and $\varepsilon < 1/2$, we have $a > 1/2$. 
Thus, for all $i,j \in [n]$ and $a \in [1-\varepsilon, 1+\varepsilon]$
\begin{equation}
\sup_{a \in [1-\varepsilon, 1+\varepsilon]} |f'_{ij}(a)| \leq \sqrt{2}  M_g.
\end{equation}
On the other hand, $M_g$ is defined as the maximum (of absolute values) of standard Gaussian random variables.
Therefore, union bound and standard tail estimates imply that with probability $1-\delta/2$:
\begin{equation}
M_g \leq \sqrt{2 \log(8n/\delta)}.
\end{equation}
Thus,  the following  holds  with probability $1-\delta/2$:
\begin{equation}\label{eq:max-fprime-ij}
     \sup_{a \in [1-\varepsilon, 1+\varepsilon]} |f'_{ij}(a)| \leq 2\sqrt{\log(8n/\delta)}.
 \end{equation}
Let $\mathcal{N}$ be an $r$-net of the interval $[1-\varepsilon, 1+\varepsilon]$ with spacing $r = \frac{\varepsilon}{ n}$. 
The size of the net is $|\mathcal{N}| \leq \lceil \frac{2\varepsilon}{r} \rceil \leq 2n + 1 \leq 3n$.
For a fixed value of $a$, the total number of random variables $f_{ij}(a)$ on the grid is $N_{tot} = n^2 |\mathcal{N}| \leq 3n^3$.
Applying the union bound over all pairs $(i,j)$ and grid points $a_k \in \mathcal{N}$:
\begin{equation}
\mathbf{P}\left( \max_{i,j,k} |f_{ij}(a_k)| > t \right) \leq 2 N_{tot} e^{-t^2/2}.
\end{equation}
The latter implies that with probability $1-\delta/2$:
\begin{equation}\label{eq:max-f-ij}
\max_{i,j, a_k \in \mathcal{N}} |f_{ij}(a_k)| \leq \sqrt{2 \log\left(12 n^3 / \delta\right)} .
\end{equation}
Combining \eqref{eq:max-fprime-ij}, \eqref{eq:max-f-ij} and mean value theorem:
\begin{equation}
\Phi_1 \leq \max_{i,j,k} |f_{ij}(a_k)| + r \sup_{i,j,a} |f'_{ij}(a)|.
\end{equation}
Plugging in $r = \frac{\varepsilon}{n}$, we then obtain that with probability at least $1-\delta$:
\begin{equation}
\Phi_1 \leq \sqrt{2 \log\left(12 n^3 / \delta\right)} + \frac{2\varepsilon}{n} \sqrt{\log(8n/\delta)}.
\end{equation}
This concludes the proof.

\subsection{Proof of Lemma \ref{lem:zeta2_bound}}
\label{sec:proof-lem-zeta-2}
\begin{proof}
The proof follows a similar structure to \Cref{lem:zeta1_bound}.
We define the normalized random vectors $g_i = \eta_i / \omega_i$, $g^\prime_j = \eta^\prime_j / \omega_j$ and
\begin{align}
f_{ij}(a) &= \frac{\omega_i g_i - a \omega_j g^\prime_j}{\sqrt{\omega_i^2 + a^2\omega_j^2}}.
\end{align}
For any fixed $a$, $f_{ij}(a)$ is a linear combination of independent Gaussians normalized to have unit variance; thus $f_{ij}(a) \sim N(0, I_d)$. Our objective now is to upper bound the quantity $
Z = \sup_{a \in [1-\varepsilon, 1+\varepsilon]} \max_{i,j} \left|\|f_{ij}(a)\|^2- d\right|$. Again, as done in the proof of \Cref{lem:zeta1_bound}, we will bound $|\frac{d}{da} \|f_{ij}(a)\|^2|$ uniformly over $[1-\varepsilon, 1+\varepsilon]$ and $\|f_{ij}(a)\|^2$ on fixed $\epsilon$-net of points then employ a Lipschitz bound style argument.

Let us now bound the derivative of $\|f_{ij}(a)\|^2$ with respect to $a$. Using the chain rule we obtain
\begin{equation}
\Big|\frac{d}{da} \|f_{ij}(a)\|^2 \Big| 
= 2 \big|f_{ij}(a)^T f'_{ij}(a)\big|
\leq 2 \norm{f_{ij}(a)} \norm{f'_{ij}(a)}.
\end{equation}
We now give uniform upper bounds for $a\in [1-\varepsilon,1+\varepsilon]$ for each of the norms on the right-hand side. 
Using the triangle inequality and Cauchy-Schwarz:
\begin{equation}
\|f_{ij}(a)\| \leq \frac{\omega_i \|g_i\| + a \omega_j \|g^\prime_j\|}{\sqrt{\omega_i^2 + a^2\omega_j^2}} \leq \frac{\omega_i + a \omega_j}{\sqrt{\omega_i^2 + a^2\omega_j^2}} M_g \leq \sqrt{2} M_g,
\end{equation}
where $M_g = \max_{k} \brc{\max(\|g_k\|, \|g^\prime_k\|)}$.
Similarly,
\begin{align}
\norm{f'_{ij}(a) }
&= \norm{\frac{-(\omega_i^2 \omega_j g^\prime_j + a \omega_j^2 \omega_i g_i)}{(\omega_i^2 + a^2\omega_j^2)^{3/2}}}
\leq M_g \frac{\omega_i \omega_j (\omega_i + a \omega_j)}{(\omega_i^2 + a^2\omega_j^2)^{3/2}}.
\end{align}
As already stated in the proof of the previous lemma:
\begin{equation}
\frac{\omega_i \omega_j (\omega_i + a \omega_j)}{(\omega_i^2 + a^2\omega_j^2)^{3/2}} 
\leq  \frac{1}{\sqrt{2}a}.
\end{equation}
Since $\varepsilon < 1/2$, $a \geq 1/2$, implying $1/(\sqrt{2}a) \leq \sqrt{2}$. Therefore, $\|f'_{ij}(a)\| \leq \sqrt{2} M_g$.
Combining these results, the derivative of the squared norm is bounded by:
\begin{equation}
\left| \frac{d}{da} \|f_{ij}(a)\|^2 \right| \leq 4 M_g^2.
\end{equation}

Next, we bound $M_g^2$.
Recall, that each $g_k$ and $g^\prime_k$ are standard Gaussians, thus their norms are distributed as $\chi^2_d$. 
Using \Cref{lem:massart} and taking a union bound over $2n$ variables we get that with probability $1-\delta/2$:
\begin{equation}
M_g^2 \leq d + 2\sqrt{d \log(4n/\delta)} + 2\log(4n/\delta).
\end{equation}

Finally, we apply an $\varepsilon$-net argument. Let $\mathcal{N}$ be an $r$-net of $[1-\varepsilon, 1+\varepsilon]$ with $r = \varepsilon/n$. 
The net size is bounded by $3n$, resulting in $N_{tot} \leq 3n^3$ points to check for all pairs $(i,j)$.
For any fixed $a_k \in \mathcal{N}$, $\|f_{ij}(a_k)\|^2 \sim \chi^2_d$. Using the two-sided concentration bound $P(|X-d| \geq 2\sqrt{dx} + 2x) \leq 2e^{-x}$ from \Cref{lem:massart} and applying the union bound over $N_{tot}$ points with failure probability $\delta/2$:
\begin{equation}
\max_{i,j, k} \left| \|f_{ij}(a_k)\|^2 - d \right| \leq 2\sqrt{d \log(12 n^3 / \delta)} + 2\log(12 n^3 / \delta).
\end{equation}

Combining the discretization error and the Lipschitz bound:
\begin{equation}
Z \leq \max_{i,j,k} \left| \|f_{ij}(a_k)\|^2 - d \right| + r \sup_{i,j,a} \left| \frac{d}{da} \|f_{ij}(a)\|^2 \right|.
\end{equation}
Substituting $r = \varepsilon/n$ and the derived bounds, with probability at least $1-\delta$:
\begin{equation}
Z \leq 2\sqrt{d \log(12 n^3 / \delta)} + 2\log(12 n^3 / \delta) + \frac{4\varepsilon}{n} \left( d + 2\sqrt{d \log(4n/\delta)} + 2\log(4n/\delta) \right).
\end{equation}
\end{proof}

\section{Experiments on Real-World Translation Data}
\label{subsec:real_lang}

We validate our method on cross-encoder sentence matching task. This and similar tasks arise in real-world data pipelines, where embeddings or features from different models are compared or analyzed, e.g. multi-vendor aggregation in RAG systems, asymmetric query/document encoders, etc.
Concretely, our goal is to match English sentences from OPUS-100 parallel corpora \citep{opus} to their corresponding translations across 7 languages (fr, hy, my, kk, ka, ky, ja). Note that the ground-truth is available, as OPUS-100 is sentence-level parallel. 
English-side sentences are encoded using \textit{all-MiniLM-L6-v2} \citep{reimers-2019-sentence-bert}, while for target-language side we use completely different \textit{paraphrase-multilingual-MiniLM-L12-v2} encoder \citep{reimers-2020-multilingual-sentence-bert}.
Results are shown in \Cref{fig:rw_lang}.

Two encoders consistently differ in centered scale by a factor of $\approx 3$, with $\tauhat$ tightly concentrated across seeds in line with the rate predicted by \Cref{thm:tau-affine}.
On well-resourced pairs (en-fr) both methods recover the matching with $\approx 96\%$ accuracy and the gap is negligible, directly illustrating our \textit{standardization cost} caveat.
On low-resource pairs, where the multilingual encoder produces noisier embeddings, naive LSL accuracy degrades to as low as 23\% (en-ky) while Affine LSL maintains a strict advantage in every pair.

\Cref{fig:rw_lang_pca} makes this geometric: the raw embedding clouds occupy visibly different regions of PC space, while standardization collapses them onto a shared geometry.

\begin{figure}
    \centering
    \includegraphics[width=0.48\textwidth]{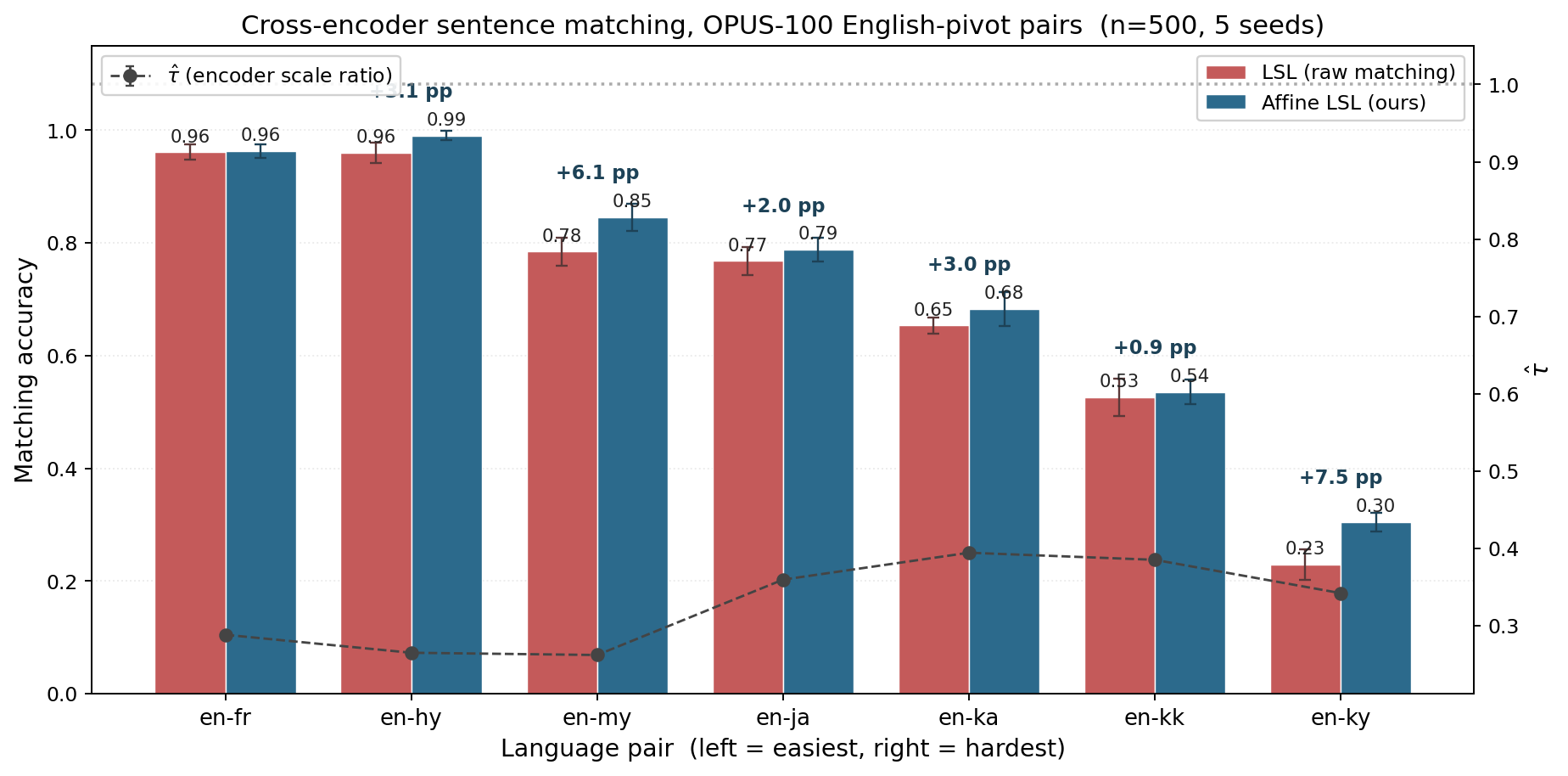}
    \caption{Accuracies of vanilla LSL and Affine LSL (ours) on sentence matching task. For each pair we match 500 English sentences to their translations in 7 pivot languages. As one can see, our approach outperforms vanilla LSL on all, especially harder pairs. Also overlayed is predicted $\tauhat$ for each pair. All experiments are repeated and averaged across 5 independent trials.}
    \label{fig:rw_lang}
\end{figure}

\begin{figure}
    \centering
    \includegraphics[width=0.48\textwidth]{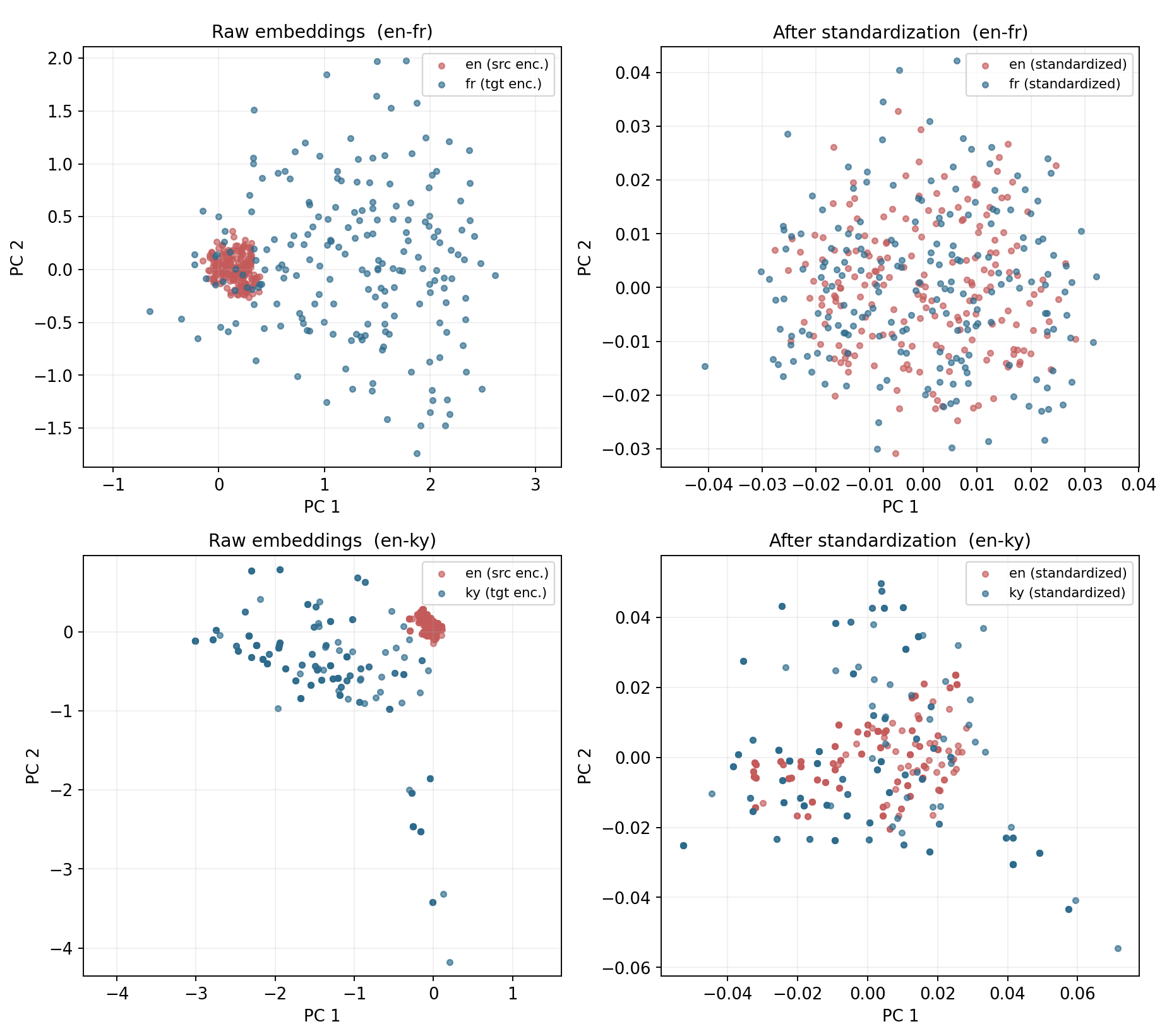}
    \caption{PCA visualization of OPUS-100 en-fr (easy) and en-ky (hard) sentence embeddings. Note that here standardization changes the geometry of the space to better suit the matching problem down the line.}
    \label{fig:rw_lang_pca}
\end{figure}